\documentclass{amsart}
\usepackage{amsmath}
\usepackage{amsfonts}
\usepackage{amssymb}
\usepackage{amsthm}
\usepackage{graphicx}
\usepackage{enumitem}
\usepackage{verbatim}
\usepackage{tikz-cd}
\usepackage{float}
\usepackage{hyperref}

\theoremstyle{plain}% default
\newtheorem{theorem}{Theorem}[section]
\newtheorem{thm}[theorem]{Theorem}
\newtheorem{lemma}[theorem]{Lemma}
\newtheorem{proposition}[theorem]{Proposition}
\newtheorem{prop}[theorem]{Proposition}

\newtheorem{cor}[theorem]{Corollary}
\newtheorem{question}[theorem]{Question}

\newtheorem{rem}[theorem]{Remark}
\newtheorem*{mthm}{Theorem \ref{thm:main}}

\DeclareMathOperator{\Ram}{Ram}
\DeclareMathOperator{\vol}{vol}
\DeclareMathOperator{\Mat}{Mat}
\DeclareMathOperator{\SL}{SL}
\DeclareMathOperator{\PSL}{PSL}
\DeclareMathOperator{\Hy}{\mathbb{H}}
\DeclareMathOperator{\N}{\mathbb{N}}
\DeclareMathOperator{\Q}{\mathbb{Q}}
\DeclareMathOperator{\R}{\mathbb{R}}
\DeclareMathOperator{\Z}{\mathbb{Z}}
\DeclareMathOperator{\C}{\mathbb{C}}

\DeclareMathOperator{\Nr}{\mathrm{N}}
\DeclareMathOperator{\nrm}{nrm}
\DeclareMathOperator{\tr}{tr}
\DeclareMathOperator{\gen}{gen}
\newcommand{\Fr}[1]{\ensuremath{\mathfrak{#1}}}
\newcommand{\A}{\mathcal{A}}

\newcommand{\cO}{\mathcal{O}}
\newcommand{\bH}{\mathbb{H}}
\newcommand{\p}{\mathfrak{p}}
\newcommand{\q}{\mathfrak{q}}

\newcommand{\surf}[1]{\bH^2/#1}

\newcommand{\intring}[1]{\mathcal{O}_{#1}}
\newcommand\restr[2]{{% we make the whole thing an ordinary symbol
  \left.\kern-\nulldelimiterspace % automatically resize the bar with \right
  #1 % the function
 % \vphantom{\big|} % pretend it's a little taller at normal size
  \right|_{#2} % this is the delimiter
  }}

\newcommand{\quat}[3]{\left(\frac{#1, #2}{#3}\right)}

\newlist{arrowlist}{itemize}{1}
\setlist[arrowlist]{label=$\Leftrightarrow$}

%~~~~~~~~~~~~~~~~~~~~~~~~~~~~~~~~~~~~~~~~~~~~~~~~~~~
%~~~~~~~~~~~~~~~~~~~~~~~~~~~~~~~~~~~~~~~~~~~~~~~~~~~
%~~~~~~~~~~~~~~~~~~~~~~~~~~~~~~~~~~~~~~~~~~~~~~~~~~~

\title{On the genus of congruence surfaces from maximal orders}
\author{Eric Albers}
\address{Department of Mathematics\\ Temple University\\ Philadelphia, Pennsylvania 19122}
\email{eric.albers@temple.edu}
\author{Nicholas Miller}
\address{Department of Mathematics\\Indiana University\\Bloomington, IN 47405}
\email{nimimill@iu.edu}

\begin{document}

%~~~~~~~~~~~~~~~~~~~~~~~~~~~~~~~~~~~~~~~~~~~~~~~~~~~
%~~~~~~~~~~~~~~~~~~~~~~~~~~~~~~~~~~~~~~~~~~~~~~~~~~~
%~~~~~~~~~~~~~~~~~~~~~~~~~~~~~~~~~~~~~~~~~~~~~~~~~~~

\begin{abstract}
In this paper, we investigate a question of Breuillard and Reid concerning which genera can be obtained by closed congruence surfaces.
Specifically, we study a smaller set of objects, namely the closed congruence surfaces which can be constructed by a maximal order in a quaternion algebra, and show that there is no surface of genus $212$ in this class.
In particular, we show that Breuillard and Reid's question restricted to such surfaces has a negative answer.
\end{abstract}

%~~~~~~~~~~~~~~~~~~~~~~~~~~~~~~~~~~~~~~~~~~~~~~~~~~~
%~~~~~~~~~~~~~~~~~~~~~~~~~~~~~~~~~~~~~~~~~~~~~~~~~~~
%~~~~~~~~~~~~~~~~~~~~~~~~~~~~~~~~~~~~~~~~~~~~~~~~~~~

\maketitle

%~~~~~~~~~~~~~~~~~~~~~~~~~~~~~~~~~~~~~~~~~~~~~~~~~~~
%~~~~~~~~~~~~~~~~~~~~~~~~~~~~~~~~~~~~~~~~~~~~~~~~~~~
%~~~~~~~~~~~~~~~~~~~~~~~~~~~~~~~~~~~~~~~~~~~~~~~~~~~

\section{Introduction}

Given a surface $S$ of genus $g$ and $n$ punctures, a longstanding question in Teichm\"uller theory is to find and describe all hyperbolic metrics in Teichm\"uller space, $\mathcal{T}_{g,n}$, which maximize systolic length.
Specifically, for fixed topological surface $S$ can one describe the set of all hyperbolic metrics $m$ such that $(S,m)$ maximizes the length of the shortest non-contractible curve on $S$.
If $\textrm{sys}_{g,n}$ denotes the function on $\mathcal{T}_{g,n}$ that gives the length of this shortest curve, one knows that $\textrm{sys}_{g,n}$ has a global maximum by Mumford's compactness theorem \cite{Mumford}, however there are currently very few topological types of surfaces for which we can explicitly describe a global maximum of this function.
In the non-compact setting, Schmutz \cite{Schmutz} showed that some principal congruence subgroups of $\PSL(2,\Z)$ give examples of punctured surfaces which maximize systole and, in the compact setting, the Bolza surface is known to be the unique global maximum of $\textrm{sys}_{2,0}$ in $\mathcal{T}_{2,0}$ \cite{Schmutz2}.
Unfortunately, these surfaces are currently the only explicitly known global maximums of $\textrm{sys}_{g,n}$.

The two types of examples above exhibit many similar and striking properties, for instance they are all arithmetic and they all have large groups of symmetries.
In fact, all of the known maximizers fit into a special class of Riemann surfaces, the so-called congruence surfaces, which are natural candidates to produce many global maximizers of $\textrm{sys}_{g,n}$.
By definition, congruence surfaces are arithmetic and have fundamental groups which contain a principal congruence subgroup (see the definition in Section \ref{sec:hypback}).
These surfaces are good candidates to maximize $\textrm{sys}_{g,n}$ as all congruence surfaces have spectral gap bounded below and are therefore ``big and round'', i.e. they cannot contain a short curve which disconnects them.
Further evidence that these surfaces have the potential to be global maximizers is given by Buser--Sarnak \cite{BuserSarnak} and Katz--Schaps--Vishne \cite{KSV}, who show that, asymptotically, congruence surfaces behave near optimally with respect to systole growth in towers of covers.

Having such a metric on a surface of genus $g$ is a very rare property.
Indeed, combining the aforementioned lower bound on spectral gap with a theorem of Zograf \cite{Zograf} shows that there can only be finitely many congruence surfaces in each genus.
Therefore if one wants to understand whether or not congruence surfaces maximize systole in their respective Teichm\"uller spaces, one needs to first understand these finite sets of surfaces.
For non-compact manifolds, it is a result of Sebbar \cite{SEB} that there are precisely $32$ conjugacy classes of torsion free, congruence subgroups of $\PSL(2,\Z)$ with genus $0$ and hence $32$ congruence surfaces of genus $0$.
However in the compact setting, our knowledge of congruence surfaces is still incomplete.
In fact, it is still an open question of Breuillard and Reid \cite{ThurstonConf} whether such surfaces exist in every genus.

\begin{question}[Breuillard--Reid]\label{Reid}
Let $T$ denote the set of all closed congruence surfaces, then does the set
\[  \{\gen(M) \mid M \in T\} \]
consist of all natural numbers at least $2$?
\end{question}

\noindent Moreover if the answer to Question \ref{Reid} is yes, then the expectation is that these surfaces do not come from the same commensurability class nor trace fields of bounded degree.

%For any surface $M\in T$, let $K_M$ denote its invariant trace field.

\begin{question}[Breuillard--Reid]\label{Reid2}
If Question \ref{Reid} has an affirmative answer, is it true that such surfaces do not come from invariant trace fields $K_M$ of bounded degree?

Perhaps more precisely, suppose Question \ref{Reid} has an affirmative answer.
Then for each natural number $n$ and any choice of $M_n\in T$ with $\gen(M_n)=n$, is it necessarily true that
$$\sup_{n\ge 2}[K_{M_n}:\Q]=\infty?$$
\end{question}

As the surfaces in Question \ref{Reid} and \ref{Reid2} are all compact and arithmetic, all commensurability classes of lattices arise from a particular quaternion algebra construction (see Sections \ref{sec:quatbackground} and \ref{sec:hypback}).
In general these commensurability classes contain a plethora of lattices, including infinitely many maximal lattices, however these commensurability classes always have certain distinguished lattices whose volume, torsion elements, and signature as Fuchsian groups are particularly easy to describe.
These distinguished lattices are constructed using maximal orders in quaternion algebras.

In this paper we investigate Question \ref{Reid} for the subclass of congruence surfaces which have fundamental group given by such a maximal order.
We show that for this subclass of congruence surfaces, Question \ref{Reid} surprisingly has a negative answer.

\begin{theorem}\label{thm:main}
Contingent on the validity of Dokchitser's algorithm to numerically compute L-functions, there is no closed congruence surface constructed from a maximal order of genus 212.
In particular, Question \ref{Reid} restricted to this class of surfaces has a negative answer.
\end{theorem}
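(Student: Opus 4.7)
The plan is to leverage the explicit volume and torsion formulas for Fuchsian groups derived from maximal orders, and then to carry out a finite enumeration that rules out genus $212$ case by case. Let $B$ be an indefinite quaternion algebra over a totally real field $K$ of degree $n=[K:\Q]$ that is ramified at all but one real place, let $\cO\subset B$ be a maximal order, and let $\Gamma^1_\cO$ denote the image in $\PSL(2,\R)$ of its norm-one units. For a congruence subgroup $\Gamma\leq\Gamma^1_\cO$ containing a principal congruence subgroup of level $\Fr{N}$, the Gauss--Bonnet/Riemann--Hurwitz formula reads
\[
2g-2 \;=\; \frac{\vol(\surf{\Gamma})}{2\pi} \;-\; \sum_{i}\left(1-\frac{1}{e_i}\right),
\]
where $g$ is the genus of $\surf{\Gamma}$ and the sum runs over conjugacy classes of elliptic elements of $\Gamma$. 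The covolume of $\Gamma^1_\cO$ is given explicitly by Borel's formula as a product of $\zeta_K(-1)$, a power of $|d_K|$, and local Euler factors at primes dividing the reduced discriminant $\Ram(B)$; the index $[\Gamma^1_\cO:\Gamma]$ and the enumeration of elliptic classes are likewise computable from class-field-theoretic data attached to $(K,B,\Fr{N})$.

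The second step is to reduce the problem to a finite enumeration. Combining a Brauer--Siegel type lower bound of the form $\zeta_K(-1)\gg |d_K|^{1/2-\epsilon}$ with Odlyzko's discriminant bounds forces each of $n$, $|d_K|$, $\Nr(\Ram(B))$, and $\Nr(\Fr{N})$ to lie below explicit thresholds once the target genus is fixed at $212$. I would then consult a number-field table such as the LMFDB to list all totally real fields $K$ of degree up to the allowed bound, enumerate the admissible quaternion algebras $B$ over each, enumerate the permissible levels $\Fr{N}$, and run through all congruence subgroups $\Gamma$ sitting between $\Gamma(\Fr{N})$ and $\Gamma^1_\cO$.

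The third step is the computational verification. For each allowed tuple $(K,B,\Fr{N},\Gamma)$, one computes $\zeta_K(-1)$ numerically via Dokchitser's algorithm for $L$-functions (this is the dependence flagged in the theorem statement), computes the local index contribution using the structure of $(\cO/\Fr{N})^\times$ and its image under $\nrm$, counts elliptic contributions through the standard embedding theory of cyclotomic orders into $\cO$, and assembles the genus from the formula above. The theorem then follows by inspection of the resulting finite list: no tuple produces genus exactly $212$.

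The principal obstacle is making the enumeration simultaneously tight enough to be tractable and provably exhaustive. Sharp quantitative lower bounds on $\zeta_K(-1)$ are needed to keep the search region small, but because the elliptic sum \emph{decreases} $2g-2$ one also needs a matching upper bound on the elliptic contribution, so that high-volume candidates cannot secretly deflate to genus $212$. A secondary difficulty is treating non-maximal congruence subgroups strictly between $\Gamma(\Fr{N})$ and $\Gamma^1_\cO$: the combinatorics of $\nrm\colon(\cO/\Fr{N})^\times\to(\intring{K}/\Fr{N})^\times$ and its interaction with the squares in $\intring{K}^\times$ must be tracked carefully, and the numerical value of $\zeta_K(-1)$ must be delivered with certified error bars, which is exactly the role played by Dokchitser's algorithm and the reason the theorem is stated contingent on it.
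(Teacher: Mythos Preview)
You have misread the scope of the theorem. In the paper, a ``congruence surface constructed from a maximal order'' is \emph{exactly} the quotient $\bH^2/\Gamma^1_\cO$ by the full norm-one group of a maximal order $\cO$, together with the requirement that $\Gamma^1_\cO$ itself be torsion free so that the quotient is a genuine surface. It is \emph{not} the class of all congruence subgroups $\Gamma\le\Gamma^1_\cO$ of some level $\Fr{N}$. Your proposed enumeration over tuples $(K,B,\Fr{N},\Gamma)$ is therefore attacking a much larger class --- essentially Question~\ref{Reid} itself --- which the paper does not claim to resolve and which is presently open. Consequently your plan, even if carried out, would not terminate in a proof: there is no reason the finite search over levels and intermediate subgroups is feasible, and the broader assertion may well be false.

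Once the scope is corrected, two of your three ``obstacles'' evaporate. There is no level $\Fr{N}$ to vary, no lattice of intermediate subgroups to traverse, and no elliptic defect: since $\Gamma^1_\cO$ is required to be torsion free, the Riemann--Hurwitz sum is empty and Gauss--Bonnet with Borel's formula gives the clean identity
\[
g-1 \;=\; \frac{|\zeta_K(-1)|}{2^{[K:\Q]-1}}\prod_{\p\in\Ram_f(B)}\bigl(\Nr(\p)-1\bigr)\;=\;211.
\]
The real engine of the proof --- which your proposal does not mention --- is the arithmetic criterion for $\Gamma^1_\cO$ to be torsion free: $P(B^1)$ has an element of order $n$ if and only if $\xi_{2n}+\xi_{2n}^{-1}\in K$ and no $\p\in\Ram_f(B)$ splits in $K(\xi_{2n})/K$. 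Eliminating $2$- and $3$-torsion therefore forces some $\p\in\Ram_f(B)$ to split in $K(i)$ and in $K(\sqrt{-3})$, which translates into congruence and ramification constraints on $\Nr(\p)$ and on $\Delta_K$. The paper then bounds $[K:\Q]\le 10$ via Odlyzko, enumerates the finitely many admissible $K$, computes $\zeta_K(-1)$ by Dokchitser, and checks that for no such $K$ can one choose $\Ram_f(B)$ making the product equal to $211$ while simultaneously satisfying the torsion-freeness constraints. That interplay between the product formula and the splitting conditions is the heart of the argument, and it is absent from your outline.
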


\noindent Specifically, the simplest class of congruence surfaces for which one could hope to answer this question does not realize all possible genera.
We remark that this result relies on Dokchitser's algorithm \cite{Dokchitser} to compute L-functions, in particular in the course of proving Theorem \ref{thm:main} we must rely on computer computations of the special zeta value $\zeta_K(-1)$.
Dokchitser's algorithm is the algorithm implemented in both SAGE \cite{Sage} and Magma \cite{Magma} and we use its implementation in the former to compute these values.\\

\noindent \textbf{Acknowledgments:}
This work was carried out during Indiana University's summer REU program and the authors would like to thank Indiana University for their hospitality throughout this time as well as Chris Connell for organizing a fantastic program. 
The authors would also like to thank Alan Reid for enlightening correspondence about the history of this problem.
The authors additionally acknowledge NSF Grant 1461061, without whose funding this work would not have been possible.

%~~~~~~~~~~~~~~~~~~~~~~~~~~~~~~~~~~~~~~~~~~~~~~~~~~~
%~~~~~~~~~~~~~~~~~~~~~~~~~~~~~~~~~~~~~~~~~~~~~~~~~~~
%~~~~~~~~~~~~~~~~~~~~~~~~~~~~~~~~~~~~~~~~~~~~~~~~~~~

\section{Background}
\subsection{Number theoretic preliminaries}\label{sec:ntback}
In this section, we recall some basic facts about number fields and set up notation for the rest of the paper.
The reader is referred to \cite{MR, Marcus, NK} for a thorough treatment of these topics.
Throughout the rest of this paper we will always use $K$ to denote a number field and $\mathcal{O}_K$ its ring of integers. 
Number fields come equipped with $[K:\Q]$ embeddings of $K/\Q$ into $\C$ and we call $K$ \emph{totally real} if each of these embeddings $\sigma$ has the property that $\sigma(K)\subset \R$.
As $\mathcal{O}_K$ is a free abelian group of rank $[K:\Q]$, we may choose a basis $\{\beta_1, \dots, \beta_n\}$ of $\mathcal{O}_K$ over $\Z$ and define the \emph{discriminant} of $K$, henceforth denoted $\Delta_K$, as the quantity
\begin{equation}\label{eqn:disceqn}
\textrm{disc}(\beta_1, \dots, \beta_n) = \det\left((\sigma_i(\beta_j))_{i,j}^2\right),
\end{equation}
where $1\le i\le [K:\Q]$ and each $\sigma_i$ is a distinct embedding of $K/\Q$ into $\C$.
As Equation \eqref{eqn:disceqn} is independent of choice of integral basis, $\Delta_K$ is an invariant of the number field $K$.

Given a number field $K$, a prime ideal $\Fr{p}\subset\mathcal{O}_K$, and a finite extension $L/K$, then $\Fr{p}$ factors in $\mathcal{O}_L$ as
\[\p\intring{L} = \q_1^{e_1} \dots \q_n^{e_n}.\]
We say that each $\q_i$ lies over $\p$ and write $\q_i \mid \p$.
Given a prime $\q_i\mid \p$ as above, we call $e_i$ the \emph{ramification index} and write $e_i = e(\q_i|\p)$.
The ideal $\p$ is said to be \emph{ramified} in $L/K$ if there exists a $q_i$ lying over $\p$ with $e(\q_i|\p) > 1$.
When we talk about a ramified prime of a number field $K$, without reference to an extension, we mean that the prime ramifies in $K/\Q$.
It is a standard result that a rational prime $p$ ramifies in a number field $K$ if and only if $ p \mid \Delta_K$.

Given a finite extension of number fields $L/K$ and prime ideals $\p$, $\q$ of $\mathcal{O}_K$, $\mathcal{O}_L$ (respectively) such that $\q\mid \p$, we have a natural embedding of finite fields $\intring{K}/\p \hookrightarrow \intring{L}/\q$.
We define the \emph{inertial degree} as the degree of this extension and write $f(\q\mid\p)=[\intring{L}/\q:\intring{K}/\p] $.
Given any prime $\p\in\mathcal{O}_K$, the ramification indices and inertial degrees relate to the degree of the extension $L/K$ via the following equation
\begin{equation}\label{eqn:degfromlocal}
\displaystyle[L:K] = \sum_{\q\mid \p} e(\q\mid \p)f(\q\mid \p).
\end{equation}
Given an ideal $I \subset \intring{K}$ we define its norm as the quantity $\Nr(I) = |\intring{K}/I|$.
Notice in particular that for a prime ideal $\p$, $\Nr(\p)=p^f$ where $p$ is the unique rational prime such that $\p\mid p$ and $f=f(\p\mid p)$ its inertial degree.

In Section \ref{section:higherdegcomp} we will require a more detailed statement about the relationship between ramified primes in $K$ and prime divisors of $\Delta_K$.
To this end, we briefly recall that to every number field $K$ we can associate an ideal $\mathcal{D}_K$ of $\mathcal{O}_K$ called the \emph{different ideal} whose definition we omit in favor of simply listing the properties of $\mathcal{D}_K$ that we will need.

\begin{theorem} \label{ram}
For any number field $K$,
\[\Nr(\mathcal{D}_K) = \Delta_K,\]
and a prime $\p \subset \mathcal{O}_K$ is ramified in $K$ if and only if $\p \mid \mathcal{D}_K$.
Moreover if $a$ is the exact power of $\p$ which divides $\mathcal{D}_K$ and if $e = e(\p \mid p)$ is the ramification index of $\p$ over the unique rational prime $p$ with $\p\mid p$, then
\begin{itemize}
\item $a = e - 1$ if $e \not\equiv 0 \pmod p, \textit{ i.e. } \p \text{ is tamely ramified}$,
\item $a \geq e$ if $e \equiv 0 \pmod p, \textit{ i.e. } \p \text{ is wildly ramified}$.
\end{itemize}
As the field norm is multiplicative, this implies that $\Nr(\p)^a\mid \Delta_K$ for the number $a$ above.
\end{theorem}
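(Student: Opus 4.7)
The plan is to develop the classical theory of the different and then reduce the ramification statements to a local computation. First, I define $\mathcal{D}_K$ as the inverse of the \emph{codifferent} $\mathcal{C}_K := \{x \in K : \tr_{K/\Q}(x\mathcal{O}_K) \subseteq \Z\}$; since $\tr_{K/\Q}(\mathcal{O}_K) \subseteq \Z$ we have $\mathcal{O}_K \subseteq \mathcal{C}_K$, so $\mathcal{D}_K = \mathcal{C}_K^{-1}$ is a genuine integral ideal of $\mathcal{O}_K$. For the norm identity, fix an integral basis $\{\beta_1,\dots,\beta_n\}$ of $\mathcal{O}_K$ and let $\{\beta_i^*\}$ denote its dual basis for the trace pairing, which by construction is a $\Z$-basis of $\mathcal{C}_K$. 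The Gram matrix $(\tr_{K/\Q}(\beta_i\beta_j))_{i,j}$ is simultaneously the change-of-basis matrix from $\{\beta_i^*\}$ to $\{\beta_i\}$ and a matrix whose determinant equals $\Delta_K$ by \eqref{eqn:disceqn}. Hence $\Nr(\mathcal{D}_K) = [\mathcal{O}_K : \mathcal{D}_K] = [\mathcal{C}_K : \mathcal{O}_K] = |\Delta_K|$.

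The ramification statements are local in nature, so next I would establish that the different commutes with completion, $\mathcal{D}_K \mathcal{O}_{K_\p} = \mathcal{D}_{K_\p / \Q_p}$, and observe that unramified extensions of local fields have trivial different. Decomposing $K_\p / \Q_p$ through its maximal unramified subextension then reduces the computation to a totally ramified finite extension $L/F$ of complete local fields. Writing $L = F(\pi)$ with $\pi$ a uniformizer satisfying an Eisenstein polynomial $g(x) = x^e + c_{e-1}x^{e-1} + \cdots + c_0$, a standard computation identifying $\{\pi^i/g'(\pi)\}_{0 \le i < e}$ as the dual $\mathcal{O}_F$-basis of $\mathcal{C}_{L/F}$ yields the local formula $\mathcal{D}_{L/F} = (g'(\pi))$. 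One then reads off $a = v_L(g'(\pi))$: the leading term $e\pi^{e-1}$ has valuation $(e-1) + e\cdot v_L(e)$, while each other term $ic_i\pi^{i-1}$ has valuation at least $e + (i-1) \ge e$ by the Eisenstein condition $v_F(c_i) \ge 1$. In the tame case $p \nmid e$ we have $v_L(e) = 0$ and the leading term strictly dominates, giving $a = e-1$; in the wild case $p \mid e$, every term has valuation at least $e$, so $a \ge e$.

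The main obstacle is the local identification $\mathcal{D}_{L/F} = (g'(\pi))$: this is the one non-formal step and requires an explicit module computation verifying that $\{\pi^i/g'(\pi)\}_{0 \le i < e}$ spans the codifferent, usually carried out via the Euler-style identity obtained from polynomial division of $g(x)/(x-\pi)$ in $\mathcal{O}_L[x]$. Once this local formula is in hand, the valuation computation, the biconditional that ramified primes are precisely the divisors of $\mathcal{D}_K$, and the divisibility $\Nr(\p)^a \mid \Delta_K$ all follow by assembling the local factorization $\mathcal{D}_K = \prod_\p \mathcal{D}_{K_\p/\Q_p}$ and invoking multiplicativity of the norm together with the norm identity established in the first step.
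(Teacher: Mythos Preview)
The paper does not actually prove Theorem~\ref{ram}; it is stated as background, with the definition of $\mathcal{D}_K$ explicitly omitted and the properties attributed implicitly to the standard references \cite{MR, Marcus, NK}. Your sketch is essentially the classical proof one finds in those sources (e.g.\ Neukirch, \emph{Algebraic Number Theory}, III.2, or Serre, \emph{Local Fields}, III.6): define the different via the trace-dual lattice, identify its norm with $|\Delta_K|$ through the Gram matrix of an integral basis, localize, pass through the maximal unramified subextension, and compute $\mathcal{D}_{L/F}=(g'(\pi))$ for an Eisenstein generator. So there is nothing to compare against---your argument is correct and is precisely the standard one the paper is citing.

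One small slip: you write that the leading term $e\pi^{e-1}$ has $v_L$-valuation $(e-1)+e\cdot v_L(e)$, but it should be $(e-1)+v_L(e)$ (equivalently $(e-1)+e\cdot v_F(e)$, which is perhaps what you intended). This does not affect either conclusion, since in the tame case $v_L(e)=0$ regardless, and in the wild case every term of $g'(\pi)$ still has valuation at least $e$. Also note that your $\Nr(\mathcal{D}_K)=|\Delta_K|$ is in fact more careful than the paper's unsigned statement, since $\Delta_K$ can be negative when $K$ is not totally real.
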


Using the norm $\Nr(-)$, we can define the \emph{Dedekind zeta function} of $K$ given by the formula
\[\zeta_K(s) = \sum_{I \subset \intring{K}} \frac{1}{\Nr(I)^s},\]
where $s$ is a complex number and the sum runs through all non-zero ideals $I$ in $\intring{K}$.
The Dedekind zeta function also has an Euler product expansion as 
\begin{equation}\label{eqn:Eulerprod}
 \zeta_K(s) = \prod_{\p} \frac{1}{1 - \Nr(\p)^{-s}} ,
\end{equation}
where this product runs through all prime ideals in $\intring{K}$.
Notice that in the specific case of $K=\Q$, $\zeta_{\Q}(s)$ is simply the Riemann zeta function.

Finally, recall that a place $v$ of a number field $K$ is an equivalence class of valuations on $K$.
By a theorem of Onstrowski, all such places are given by $v_{\sigma}(x) = |\sigma(x)|$ for any of the $[K:\Q]$ distinct embeddings $K \hookrightarrow \C$ or by the $\p$-adic valuations where $\p \subset \intring{K}$ is a prime ideal.
We call the former \emph{infinite places} and the latter \emph{finite places}.
For any infinite place $v$, the completion of $K$ with respect to $v$ is isomorphic to either $\R$ or $\C$ and for any finite place $v$, $K_v$ is a finite extension of $\Q_p$.

%~~~~~~~~~~~~~~~~~~~~~~~~~~~~~~~~~~~~~~~~~~~~~~~~~~~
%~~~~~~~~~~~~~~~~~~~~~~~~~~~~~~~~~~~~~~~~~~~~~~~~~~~
%~~~~~~~~~~~~~~~~~~~~~~~~~~~~~~~~~~~~~~~~~~~~~~~~~~~

\subsection{Quaternion algebras over number fields}\label{sec:quatbackground}
By a \emph{quaternion algebra} over a number field $K$ we mean a $4$-dimensional algebra over $K$ with basis $\{1, I, J, IJ\}$ 
and multiplication defined by
\[I^2 = \alpha, \quad J^2 = \beta, \quad IJ = -JI,\]
for some $\alpha,\beta \in K^*$.
Quaternion algebras are central, simple algebras and any 4-dimensional central, simple algebra over $K$ is isomorphic to one of the above form.
We frequently use the compact notation $\quat{\alpha}{\beta}{K}$ to denote the $K$-quaternion algebra where $I^2 = \alpha, J^2 = \beta$.
Given a quaternion algebra $\mathcal{A}=\quat{\alpha}{\beta}{K}$ and an element $a=w+xI+yJ+zIJ\in\mathcal{A}$, we define its norm by the equation
$$\nrm(a)=w^2-\alpha x^2-\beta y^2+\alpha\beta z^2,$$
and write $\mathcal{A}^1$ for the group of norm $1$ elements.
Similarly we define the trace of $a$ as 
$$\tr(a)=2w.$$

Given a quaternion algebra $\A=\quat{\alpha}{\beta}{K}$ and some finite or infinite place $v$, we define the $K_v$-algebra $\A_v$ by the equation
\[\A_v=\A \otimes_{K} K_v .\]
We say that $\A$ \emph{splits} over $v$ if $\A_v\cong \Mat_2(K_v)$ and that $\A_v$ \emph{ramifies} otherwise.
We denote by $\Ram(\A)$ the set of all places for which $\A$ is ramified and note that this set is always finite and of even cardinality.
We use $\Ram_f(\A)$ to denote the places in $\Ram(\A)$ which are finite.
It is a theorem of Albert--Brauer--Hasse--Noether that $\Ram(\A)$ completely determines the algebra $\A$ up to $K$-isomorphism.

For a quaternion algebra $\mathcal{A}$ over $K$ and $L/K$ any extension, we define
$$\mathcal{A}_L=\A\otimes_K L.$$
In the sequel, we will need to understand when a quadratic extension $L/K$ embeds in $\mathcal{A}$, we therefore have the following theorem (see \cite[Theorem 7.3.3]{MR}).
\begin{thm}
Given a quadratic extension $L/K$, the following are equivalent:
\begin{enumerate}
\item $L$ embeds in $\mathcal{A}$,
\item $\mathcal{A}_L\cong\Mat_2(L)$,
\item $L\otimes_K K_v$ is a field for every $v\in\Ram(\A)$.
\end{enumerate}
\end{thm}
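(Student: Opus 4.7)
The plan is to establish the three equivalences through the cycle $(1)\Rightarrow(2)\Rightarrow(3)\Rightarrow(1)$ using standard machinery from central simple algebras together with the Albert--Brauer--Hasse--Noether local-global principle already cited in the excerpt.

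For $(1)\Rightarrow(2)$, I would use that $L/K$ is separable (automatic for number fields), so that $L\otimes_K L\cong L\times L$. If $L\hookrightarrow\mathcal{A}$, then tensoring with $L$ over $K$ embeds $L\otimes_K L$ into $\mathcal{A}_L$, producing a nontrivial idempotent. But $\mathcal{A}_L$ is a $4$-dimensional central simple $L$-algebra, hence isomorphic either to $\Mat_2(L)$ or to a quaternion division algebra over $L$, and the latter admits no nontrivial idempotents. Hence $\mathcal{A}_L\cong\Mat_2(L)$.

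For $(2)\Rightarrow(1)$, I would invoke the classical fact from the theory of central simple algebras: a field extension $L/K$ of degree equal to the degree of a central simple $K$-algebra $\mathcal{A}$ is a splitting field for $\mathcal{A}$ if and only if $L$ embeds in $\mathcal{A}$ as a maximal subfield. Here both degrees are $2$, so once $\mathcal{A}_L$ is a matrix algebra, $L$ appears inside $\mathcal{A}$. This is the step where the proof is least elementary.

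For $(2)\Leftrightarrow(3)$, I would apply Albert--Brauer--Hasse--Noether to $\mathcal{A}_L$: namely $\mathcal{A}_L\cong\Mat_2(L)$ if and only if $\mathcal{A}_L\otimes_L L_w$ splits for every place $w$ of $L$. Since $(\mathcal{A}_L)_w\cong\mathcal{A}_v\otimes_{K_v}L_w$ whenever $w\mid v$, the splitting condition is automatic at $v\notin\Ram(\mathcal{A})$. At $v\in\Ram(\mathcal{A})$, $\mathcal{A}_v$ is a quaternion division algebra over $K_v$, and a standard local fact (true in the archimedean case via $\bH\otimes_\R\C\cong\Mat_2(\C)$, and in the non-archimedean case via the norm characterization of splitting quadratic extensions) tells us that $\mathcal{A}_v\otimes_{K_v}L_w\cong\Mat_2(L_w)$ precisely when $L_w/K_v$ is a proper extension. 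Decomposing $L\otimes_K K_v\cong\prod_{w\mid v}L_w$, requiring a proper extension at every $w\mid v$ is exactly the condition that $v$ not split in $L/K$, which in turn is equivalent to $L\otimes_K K_v$ being a field.

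The main obstacle is the $(2)\Rightarrow(1)$ direction, which is not transparent from the definitions alone and relies on the structural result that splitting fields of matching degree embed as maximal subfields of a central simple algebra. Once that is in hand, the remaining implications reduce to organized bookkeeping with separability, the $4$-dimensional dichotomy between matrix and division algebras, and the local-global principle.
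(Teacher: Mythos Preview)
Your argument is correct and follows the standard route one finds in the literature. However, the paper itself does not prove this statement: it is quoted as \cite[Theorem 7.3.3]{MR} and used as a black box. So there is no ``paper's own proof'' to compare against, only the reference to Maclachlan--Reid. Your sketch is essentially the same argument one finds there (and in Voight's book on quaternion algebras): $(1)\Leftrightarrow(2)$ via the maximal-subfield/splitting-field correspondence for central simple algebras, and $(2)\Leftrightarrow(3)$ via Albert--Brauer--Hasse--Noether together with the local fact that the unique quaternion division algebra over a local field is split by any quadratic extension. The one place you might tighten things is $(2)\Rightarrow(1)$: as stated, the embedding-of-splitting-fields result requires $\mathcal{A}$ to be a division algebra (or at least that $[L:K]$ divide the index, not just the degree), which is fine here since the paper only applies the theorem when $\mathcal{A}$ is a division algebra, but is worth flagging.
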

\noindent In particular, the last condition is equivalent to requiring that no $v\in\Ram(\A)$ splits in $L/K$.

As with rings of integers of numbers fields, there is a corresponding notion of integral elements in quaternion algebras over number fields.
We call $a \in \A$ an integer if $\intring{K}[a]$ is an $\mathcal{O}_K$-lattice in $\A$, which is equivalent to requiring both $\nrm(a), \tr(a) \in \intring{K}$.
We can then define an \emph{ideal} $I \subset \A$ as a complete $\intring{K}$-lattice and an \emph{order} $\cO$ of $\A$ as an ideal that is a ring with $1$.
As per \cite[\S 6.1]{MR}, orders can be described as rings of integers $\mathcal{O}$ such that $K\mathcal{O}=\A$, from which one can see that there exist \emph{maximal orders} $\mathcal{O}$, i.e. maximal elements in the class of orders with respect to inclusion.

We also need a notion of congruence subgroup for such orders.
Given a quaternion algebra $\A$, a maximal order $\mathcal{O}\subset\A$, and an integral, two-sided ideal $I\subset\mathcal{O}$, we define \emph{the principal congruence subgroup of level $I$} in $\cO^1$ by
\[\cO^1(I) = \{ x \in \cO^1 | x - 1 \in I\}.\]
That is to say $\cO^1(I)$ is the kernel of the homomorphism $\cO^1 \rightarrow (\cO/I)^*$.

%~~~~~~~~~~~~~~~~~~~~~~~~~~~~~~~~~~~~~~~~~~~~~~~~~~~
%~~~~~~~~~~~~~~~~~~~~~~~~~~~~~~~~~~~~~~~~~~~~~~~~~~~
%~~~~~~~~~~~~~~~~~~~~~~~~~~~~~~~~~~~~~~~~~~~~~~~~~~~

\subsection{Hyperbolic surfaces via quaternion algebras}\label{sec:hypback}
We now show how to construct hyperbolic surfaces using quaternion algebras over totally real number fields.
Let $\A$ be a quaternion algebra over a totally real number field $K$, of degree $d$, that is ramified at all but one of its infinite places.
We can and do assume the embedding at which $\A$ splits is the identity.
Then there is an isomorphism of algebras over $\R$ given by
\begin{equation}\label{eqn:quatiso}
\A \otimes_{\Q} \R \cong \Mat_2(\R)\oplus\mathcal{H}^{d-1},
\end{equation}
where $\mathcal{H}$ denotes Hamilton's quaternions \cite[Theorem 8.1.1]{MR}.
Let $\rho: \A \rightarrow \Mat_2(\R)$ be given by the composition of the inclusion map $\A \rightarrow \A \otimes_{\Q} \R$ and the projection onto the first term in Equation \eqref{eqn:quatiso}.
Then given an order $\mathcal{O}$ we can view $\mathcal{O}$ as a subgroup of $\Mat_2(\R)$ via $\rho(\mathcal{O})$.
Under this embedding, we have that for any $a\in\mathcal{A}$, $\nrm(a)=\det(\rho(a))$ and therefore in fact $\rho(\mathcal{O}^1)<\SL_2(\R)$.
Let
$$P: \SL_2(\R) \rightarrow \PSL_2(\R),$$
be the natural quotient map, then $P(\rho(\cO^1)) < \PSL_2(\R)$ for any order $\mathcal{O}\subset\A$.

The subgroup $P(\rho(\cO^1))$ is discrete and of finite covolume and moreover when $\A$ is a division algebra (not isomorphic to matrices), it is cocompact \cite[Theorem 8.1.2]{MR}.
In order to have such an embedding, it is of course necessary that our algebra must split over at least one infinite place, but if our algebra were to split at more than one place, the image of $\cO^1$ in any copy of $\Mat_2(\R)$ would be dense and thus not discrete \cite[Theorem 8.1.2]{MR}.
Consequently the assumption that $\A$ must split at precisely one infinite place is not only sufficient but also necessary.
We call a subgroup $\Gamma<\PSL_2(\R)$ \emph{arithmetic} if it is commensurable with some $P(\rho(\mathcal{O}^1))$ for some $\A$ and $K$ as above.
For the remainder of the paper, when we say a congruence orbifold is \textit{constructed from a maximal order} we mean it is an orbifold given by the quotient $\bH^2/P(\rho(\cO^1))$, where $\rho$ is as above and $\cO^1$ is the group of norm $1$ elements of some maximal order $\cO$ in a division algebra $\A$ over $K$.
When we specify that this orbifold is a congruence surface constructed from a maximal order, we also impose the condition that $P(\rho(\cO^1))$ is torsion free.
Moreover when we want to specify the number field $K$ that the algebra $\A$ giving rise to this congruence surface is defined over, we will call $K$ the \emph{trace field} of $P(\rho(\cO^1))$.
To simplify and slightly abuse notation, in the future we frequently drop our reference to the maps $\rho$ and $P$, so when we write $\surf{\cO^1}$, we implicitly mean $\surf{(P(\rho(\cO^1))}$.
We also will frequently drop reference to the map $\rho$ and write $P(\mathcal{A}^1)$ to mean $P(\rho(\mathcal{A}^1))$.

An arithmetic Fuchsian group $\Gamma$ is a congruence subgroup if it contains a principal congruence subgroup of some level (see the definition in Section \ref{sec:quatbackground}).
We remark that for a maximal order $\cO \subset \A$, $P(\cO^1)$ is necessarily a congruence subgroup as by definition it contains all principal congruence subgroups of $\cO^1$.
However, in general, not all arithmetic Fuchsian groups are congruence subgroups.

The question of when a cocompact arithmetic lattice in $\PSL_2(\R)$ has torsion elements in general is a subtle question and thus the difference between a lattice giving rise to a congruence orbifold versus a congruence surface is a delicate matter.
In particular, the torsion elements of such lattices are frequently not controlled by the arithmetic of the invariant quaternion algebra.
Fortunately in the class of orbifolds which are constructed from maximal orders, there is a much more explicit and tractable relationship \cite[Theorem 12.5.4]{MR}.

\begin{theorem}\label{tor}
Let $\A$ be a quaternion division algebra over a number field $K$ and let $\xi_{2n}$ denote the $2n^{\text{th}}$ roots of unity.
Then the following are equivalent:
\begin{enumerate}
\item The group $P(\A^1)$ contains an element of order $n$,
\item $\xi_{2n} + \xi_{2n}^{-1} \in K, \xi_{2n} \notin K$, and $L = K(\xi_{2n})$ embeds in $\A$,
\item $\xi_{2n} + \xi_{2n}^{-1} \in K, \xi_{2n} \notin K$, and if $\p \in Ram_f(\A)$ then $\p$ does not split in $K(\xi_{2n})$.
\end{enumerate}
Moreover, if $P(\A^1)$ contains an element of $n$-torsion, then for all maximal orders $\cO \subset \A$, $P(\cO^1)$ contains an element of $n$-torsion.
\end{theorem}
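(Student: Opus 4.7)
The plan is to prove the three conditions equivalent in the cycle $(1)\Rightarrow(2)\Rightarrow(3)\Rightarrow(1)$, and then separately address the ``moreover'' clause, which I expect to be the main obstacle. The equivalence $(2)\Leftrightarrow(3)$ should be essentially a direct application of the embedding criterion stated just before the theorem, specialized to $L = K(\xi_{2n})$. The finite-place condition in that criterion is precisely the non-splitting statement in (3). For infinite $v \in \Ram(\mathcal{A})$ one has $K_v = \mathbb{R}$, and $L_v$ is obtained by adjoining a root of $x^2 - (\xi_{2n}+\xi_{2n}^{-1})x + 1$; the discriminant $(\xi_{2n}+\xi_{2n}^{-1})^2 - 4 = 4\cos^2(k\pi/n) - 4$ is strictly negative for $n \ge 2$ and every Galois conjugate, so $L_v \cong \mathbb{C}$ is always a field and the infinite-place condition is automatic.

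For $(2)\Rightarrow(1)$, pick an embedding $\phi\colon L \hookrightarrow \mathcal{A}$ and set $a = \phi(\xi_{2n})$. The nontrivial $K$-automorphism of $L$ sends $\xi_{2n} \mapsto \xi_{2n}^{-1}$ (since $\xi_{2n}+\xi_{2n}^{-1} \in K$), so $\nrm(a) = N_{L/K}(\xi_{2n}) = 1$ and $a \in \mathcal{A}^1$. Since $a$ has multiplicative order $2n$ and $a^n = -1$, the class $\bar a \in P(\mathcal{A}^1)$ has order exactly $n$. Conversely, for $(1)\Rightarrow(2)$, lift $\bar a$ of order $n$ to $a \in \mathcal{A}^1$, so $a^n = \pm 1$ and $a$ has multiplicative order $n$ or $2n$. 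Since $\bar a \neq 1$, $a \notin K$, so $K(a) \subset \mathcal{A}$ is a genuine quadratic extension, and $a$ satisfies $x^2 - \tr(a)x + 1 \in K[x]$. A brief case analysis finishes: if $a$ has order $2n$ then $K(a) = K(\xi_{2n})$ with trace $\xi_{2n}+\xi_{2n}^{-1} \in K$; if $a$ has order $n$ then $n$ must be odd (otherwise $a^{n/2} = \pm 1$ would force $\bar a$ to have smaller order), and for odd $n$ the identity $K(\xi_n) = K(\xi_{2n})$ together with the fact that $\xi_{2n}+\xi_{2n}^{-1}$ lies in the maximal totally real subfield of $\mathbb{Q}(\xi_n)$ (hence in $K$) gives the same conclusion.

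The moreover clause is the real work: having produced an element of order $n$ somewhere in $P(\mathcal{A}^1)$, we must produce one inside $P(\mathcal{O}^1)$ for \emph{every} maximal order $\mathcal{O}$. Equivalently, given that $L = K(\xi_{2n})$ embeds in $\mathcal{A}$, one must exhibit a copy of $\mathcal{O}_L$ (or at least of $\mathcal{O}_K[\xi_{2n}]$) sitting inside the fixed $\mathcal{O}$. The approach I would take is local-global. At each finite place $v$, the completion $\mathcal{A}_v$ has the property that all maximal orders are conjugate under $\mathcal{A}_v^\ast$: at split places they are all $\mathcal{A}_v^\ast$-conjugates of $\Mat_2(\mathcal{O}_{K_v})$, and at ramified finite places the maximal order is unique. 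Since $\xi_{2n}$ is integral, in each $\mathcal{A}_v$ one may conjugate the image of $\xi_{2n}$ into $\mathcal{O}_v$ by some $g_v \in \mathcal{A}_v^\ast$. Strong approximation (applicable because the archimedean split place gives a non-compact factor in the adelic group of $\mathcal{A}^1$) then allows us to approximate the collection $\{g_v\}$ by a single global element $g \in \mathcal{A}^\ast$ so that $g a g^{-1} \in \mathcal{O}^1$ has the desired order $2n$. The subtlety that must be confronted is \emph{selectivity} in the sense of Chinburg--Friedman: in general an order of a quadratic field may embed into only a proper subset of the conjugacy classes of maximal orders. The hard part is verifying that this obstruction vanishes in our setting, which I would expect to follow from the fact that $L/K$ is a CM-extension (totally imaginary over the totally real trace field relevant to our surfaces), so that the class-field-theoretic conditions forcing selectivity cannot be met.
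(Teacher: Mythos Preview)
The paper does not prove Theorem~\ref{tor}; it is quoted from Maclachlan--Reid \cite[Theorem~12.5.4]{MR} and used throughout as a black box. There is thus no in-paper argument to compare against, and your proposal is effectively a reconstruction of the proof in \cite{MR}.

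Your treatment of the equivalence $(1)\Leftrightarrow(2)\Leftrightarrow(3)$ is correct. Reducing $(2)\Leftrightarrow(3)$ to the quadratic-embedding criterion stated just before the theorem, with the archimedean ramified places handled by the observation that every real conjugate of $\xi_{2n}+\xi_{2n}^{-1}$ lies in $(-2,2)$, is exactly right. The parity split on the order of the lift in $(1)\Rightarrow(2)$ is the standard wrinkle and your resolution via $K(\xi_n)=K(\xi_{2n})$ for odd $n$ is fine.

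For the moreover clause you have located the right mechanism (strong approximation for $\mathcal{A}^1$, which applies because $\mathcal{A}$ splits at an archimedean place and so satisfies the Eichler condition) and the right obstruction (Chinburg--Friedman selectivity). Your expectation that selectivity cannot occur because $L=K(\xi_{2n})$ is totally imaginary over the totally real $K$ is correct and is indeed how the argument closes. One refinement worth making explicit: the CM observation disposes of the archimedean part of the selectivity criterion, while condition~(3) itself already forces every finite place of $\Ram(\mathcal{A})$ to be non-split in $L$; together these guarantee that $L$ does not lie in the selective class field attached to $\mathcal{A}$, so $\mathcal{O}_L$ (and in particular the unit $\xi_{2n}$) embeds in every maximal order $\mathcal{O}$. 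With that addition your outline is complete and coincides with the route taken in the literature.
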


\noindent Given a quaternion algebra $\A$ and a number field $K$, we will say that a prime $\p$ \emph{eliminates n-torsion} in $P(\A^1)$ if $\p\in\Ram_f(\A)$ and $\p$ splits in $K(\xi_{2n})$.
Otherwise we say that $\p$ does not help eliminate n-torsion in $\A$.
Notice that given any $n$, such a prime $\p$ is not necessarily unique.

To conclude this section, we provide Borel's formula for the volume of $\Hy^2/\mathcal{O}^1$ where $\cO \subset \A$ is a maximal order (see \cite{BOR} or \cite[Theorem 11.1.1]{MR}).
This formula will be an indispensable tool in our ability to study the genera of congruence surfaces from maximal orders.
\begin{theorem}[Borel] \label{thm:1}
Let $K$ be a totally real number field, $\A$ a $K$-quaternion algebra that is ramified at all but one of its infinite places, $\cO \subset \A$ some maximal order. Then the volume of $\surf{\cO^1}$ is given by
\begin{equation} \label{eq:1}
\vol(\Hy^2/\mathcal{O}^1)=\frac{8\pi\Delta_K^{3/2}\zeta_K(2)}{(4\pi^2)^{[K:\Q]}}\prod_{\substack{\p \in \Ram_f(\A)}}(\Nr(\p) - 1).
\end{equation}
\end{theorem}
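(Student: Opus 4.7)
The plan is to prove Borel's formula via the adelic approach and Weil's computation of the Tamagawa number. Let $G=\SL_1(\A)$ be the algebraic $K$-group of reduced-norm-one elements of $\A$, and let $G(\mathbb{A}_K)=G(K_\infty)\times G(\mathbb{A}_{K,f})$ be its adele group. Choose the compact open subgroup $U_f=\prod_v U_v\subset G(\mathbb{A}_{K,f})$ in which $U_v$ is the group of norm-one units of a local maximal order of $\A_v$. Since $G$ is simply connected and noncompact at the split archimedean place, strong approximation for $G$ yields $G(\mathbb{A}_K)=G(K)\cdot(G(K_\infty)\cdot U_f)$; because $\cO$ is a global maximal order, $G(K)\cap(G(K_\infty)\cdot U_f)=\cO^1$. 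Taking double cosets gives a bijection
\[
G(K)\backslash G(\mathbb{A}_K)/U_f\;\cong\;\cO^1\backslash G(K_\infty),
\]
and, fixing Haar measures $\mu_\infty,\mu_f$ on the two factors, we obtain
\[
\vol_{\mu_\infty}(\cO^1\backslash G(K_\infty))\cdot \mu_f(U_f)=\mu(G(K)\backslash G(\mathbb{A}_K)).
\]
Choosing $\mu$ to be the Tamagawa measure, the right-hand side equals $\tau(G)=1$, so the task reduces to computing $\mu_f(U_f)$ and converting $\mu_\infty$ into the hyperbolic area on $\bH^2$.

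The second step is to evaluate the local factors $\mu_v(U_v)$ place by place. At an unramified finite place $v\notin\Ram_f(\A)$ one has $U_v\cong\SL_2(\intring{K_v})$ and the standard local computation yields a factor proportional to $(1-\Nr(\p_v)^{-2})$; at a ramified finite place the norm-one units of the unique local maximal order in the division algebra $\A_v$ contribute a factor of $(\Nr(\p_v)-1)^{-1}$ relative to the same normalization. Taking the product over all finite places, the local discriminant contributions assemble, via multiplicativity of the different (Theorem~\ref{ram}), into the global factor $\Delta_K^{-3/2}$; the unramified Euler factors $(1-\Nr(\p)^{-2})$ assemble by \eqref{eqn:Eulerprod} into $\zeta_K(2)^{-1}$; and the ramified places leave behind exactly $\prod_{\p\in\Ram_f(\A)}(\Nr(\p)-1)^{-1}$.

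At the archimedean places, $G(K_\infty)\cong\SL_2(\R)\times(\mathcal{H}^1)^{[K:\Q]-1}$. Each of the $[K:\Q]-1$ compact factors $\mathcal{H}^1\cong\mathrm{SU}(2)$ contributes a Tamagawa volume of $4\pi^2$, while the split factor contributes Haar measure on $\SL_2(\R)$. Quotienting by $\{\pm 1\}$ and descending along the fibration $\PSL_2(\R)\to\bH^2$ with compact fiber $\mathrm{PSO}(2)$ converts this Haar measure into the hyperbolic area form, with an explicit constant that combines with the compact-factor contributions to produce the $8\pi$ prefactor. Substituting into the global identity and solving for $\vol(\bH^2/\cO^1)$ yields exactly the right-hand side of \eqref{eq:1}.

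The main obstacle is not conceptual but the careful bookkeeping of normalizations: one must fix consistent local Haar measures, insert the convergence factors $\zeta_{K_v}(2)^{-1}$ required to define the Tamagawa measure, and verify that descending from $\SL_2(\R)$ to $\bH^2$ produces the hyperbolic area with precisely the stated constant $8\pi$. The cleanest route is to follow Borel's original computation in \cite{BOR} (or the exposition in \cite[Chapter 11]{MR}), checking each local factor against the target formula and using the functional equation of $\zeta_K$, which relates $\zeta_K(2)$ to $\Delta_K$ by a predictable power, as an external consistency check on the appearance of $\Delta_K^{3/2}$.
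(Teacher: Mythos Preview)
The paper does not prove this theorem; it is quoted as a result of Borel with references to \cite{BOR} and \cite[Theorem~11.1.1]{MR}, and no argument is supplied. So there is nothing in the paper to compare your proposal against.

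That said, your outline is the standard route to Borel's formula and is essentially the one followed in the cited references: reduce to a Tamagawa-number computation via strong approximation for $\SL_1(\A)$, evaluate the finite local volumes $\mu_v(U_v)$ (getting $(1-\Nr(\p)^{-2})$ at split finite places and an extra $(\Nr(\p)-1)^{-1}$ at places in $\Ram_f(\A)$), assemble the unramified factors into $\zeta_K(2)^{-1}$ via the Euler product, and handle the archimedean side by separating the compact $\mathcal{H}^1$ factors from the $\SL_2(\R)$ factor and then descending to hyperbolic area on $\bH^2$. Your own caveat is the right one: as written this is a proof \emph{sketch}, and the only nontrivial work left is the normalization bookkeeping (choice of local Haar measures, the convergence factors in the Tamagawa measure, and the constant in the $\SL_2(\R)\to\bH^2$ passage) needed to land exactly on $8\pi\Delta_K^{3/2}\zeta_K(2)/(4\pi^2)^{[K:\Q]}$. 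If you want a self-contained write-up, the cleanest thing is to fix once and for all the Tamagawa normalization as in \cite[\S11.1]{MR} and then verify each local factor against that normalization; otherwise, citing the result as the paper does is entirely appropriate.
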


%~~~~~~~~~~~~~~~~~~~~~~~~~~~~~~~~~~~~~~~~~~~~~~~~~~~
%~~~~~~~~~~~~~~~~~~~~~~~~~~~~~~~~~~~~~~~~~~~~~~~~~~~
%~~~~~~~~~~~~~~~~~~~~~~~~~~~~~~~~~~~~~~~~~~~~~~~~~~~

\section{Dedekind Zeta Function Bounds and Computation}\label{sec:zetabounds}

\noindent In order to understand the genera that can be achieved by a congruence surface constructed from a maximal order, we must first understand the values of the Dedekind zeta function for arbitrary number fields in Equation \eqref{eq:1}.
To do this, we rewrite Equation \eqref{eq:1} in terms of the special value $\zeta_K(-1)$ using the functional equation for the Dedekind zeta function.
We then go on to use bounds of Odlyzko to reduce the possible trace fields $K$ of such a surface to a finite, computable list.

When $K$ is a totally real number field of degree $d$, the Dedekind zeta function satisfies the functional equation \cite[Pg. 467]{NK} (see also \cite[Pg. 346]{MR})
$$\frac{|\Delta_K|^{s/2}\Gamma(s/2)^d}{\pi^{sd/2}}\zeta_K(s)=\frac{|\Delta_K|^{(1-s)/2}\Gamma((1-s)/2)^d}{\pi^{(1-s)d/2}}\zeta_K(1-s),$$
where $\Gamma(s)$ is the Gamma function.
Evaluating at $s=2$ gives that
\begin{equation}\label{eqn:zetatransfer}
\zeta_K(2)=\frac{(-2)^d\pi^{2d}}{\Delta_K^{3/2}}\zeta_K(-1),
\end{equation}
and since $(-2)^d$ has the same sign parity as $\zeta_K(-1)$, Equation \eqref{eq:1} can be written as
\[\vol(\surf{\cO^1}) = \frac{\pi|\zeta_K(-1)|}{2^{d - 3}} \prod_{\mathfrak{p} \in \Ram_f(\A)}(\Nr(\mathfrak{p}) - 1),\]
which is the formulation we will use for the rest of the paper.
When $\Hy^2/\mathcal{O}^1$ is a hyperbolic surface, the Gauss--Bonnet theorem shows that its area is
$$\vol(\Hy^2/\mathcal{O}^1)=4\pi(g-1),$$
and therefore we conclude that
\begin{equation} \label{eq:3}
g - 1 = \frac{|\zeta_K(-1)|}{2^{d - 1}}\prod_{\mathfrak{p} \in \Ram_f(\A)}(\Nr(\mathfrak{p}) - 1).
\end{equation}

In the rest of this section, we will give bounds on the quantity preceding the product in Equation \eqref{eq:3} to show that a genus $212$ congruence surface must have trace field of degree at most $10$.
The starting point of this is the following well known proposition, which comes directly from an analysis of $\zeta_K(s)$ in its Euler product form (see Equation \eqref{eqn:Eulerprod}).

\begin{prop} \label{prop:3}
For any number field $K$ of degree $d$ we have 
\[ 1 \leq \zeta_K(2) \leq \zeta(2)^{d},\]
where $\zeta(2)$ denotes the Riemann zeta function evaluated at $2$.
\end{prop}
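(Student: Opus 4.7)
The plan is to use the Euler product expansion for $\zeta_K(s)$ given in Equation \eqref{eqn:Eulerprod} and then analyze its factors prime by prime.

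For the lower bound, I would note that at $s=2$ each local factor satisfies $\Nr(\p)^{-2} > 0$, so $0 < 1 - \Nr(\p)^{-2} < 1$ and therefore $\frac{1}{1-\Nr(\p)^{-2}} > 1$. Multiplying these factors together, and using that the product converges absolutely at $s=2$, immediately gives $\zeta_K(2) \geq 1$.

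For the upper bound, the key step is to reorganize the Euler product by first grouping the prime ideals $\p \subset \intring{K}$ according to the rational prime $p$ with $\p \mid p$. By Equation \eqref{eqn:degfromlocal}, for each rational prime $p$ we have $\sum_{\p \mid p} e(\p \mid p) f(\p \mid p) = d$, which in particular forces the number of prime ideals lying over $p$ to be at most $d$. Moreover, since $\Nr(\p) = p^{f(\p \mid p)} \geq p$, we obtain the factor-wise inequality
\[
\frac{1}{1 - \Nr(\p)^{-2}} \;\leq\; \frac{1}{1 - p^{-2}}.
\]
Because each such factor exceeds $1$, multiplying at most $d$ of them together yields
\[
\prod_{\p \mid p} \frac{1}{1 - \Nr(\p)^{-2}} \;\leq\; \left(\frac{1}{1 - p^{-2}}\right)^{d}.
\]

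Finally I would take the product over all rational primes $p$, using that the $\Nr(\p)^{-2}$ factor indexed over the primes of $\intring{K}$ is the same as the double product over rational $p$ and primes $\p$ above $p$. This gives
\[
\zeta_K(2) \;=\; \prod_{p} \prod_{\p \mid p} \frac{1}{1 - \Nr(\p)^{-2}} \;\leq\; \prod_{p} \left(\frac{1}{1 - p^{-2}}\right)^{d} \;=\; \zeta(2)^{d},
\]
which is the desired upper bound. There is no real obstacle here; the only point requiring care is ensuring that the manipulations of the infinite products are justified, which is immediate from absolute convergence of $\zeta_K(s)$ for $\operatorname{Re}(s) > 1$.
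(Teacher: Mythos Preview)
Your proof is correct and matches the paper's approach: the paper does not spell out a proof but simply remarks that the proposition ``comes directly from an analysis of $\zeta_K(s)$ in its Euler product form,'' which is exactly the argument you have written out in detail. Your grouping of primes of $\intring{K}$ by the rational prime they lie over, together with the bounds $\Nr(\p)\ge p$ and $\#\{\p\mid p\}\le d$ from Equation~\eqref{eqn:degfromlocal}, is the standard way to make this precise.
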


\noindent Using Equation \eqref{eqn:zetatransfer} and the fact that $\zeta(2)=\frac{\pi^2}{6}$ allows us to convert this inequality to one for $\frac{|\zeta_K(-1)|}{2^{d - 1}}$.
In particular, we obtain
\begin{equation} \label{eq:4}
\frac{2\Delta_K^{3/2}}{\pi^{2d]}4^{d}} \leq \frac{|\zeta_K(-1)|}{2^{d - 1}} \leq \frac{\Delta_K^{3/2}}{2^{d - 1}12^{d}}.
\end{equation}
We also have the following bounds on the discriminant of a totally real number field $K$ from Takeuchi \cite[Proposition 2.3]{TK}, originally due to Odlyzko \cite{OD}.

\begin{proposition}\label{TAK}
Let $K$ be a totally real number field, then
\[\Delta_K > a^{d}e^{-b},\]
where a = 29.099, b=8.3185
\end{proposition}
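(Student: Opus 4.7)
The bound is the standard Odlyzko discriminant lower bound specialized to totally real fields, so my plan is to sketch the analytic argument that establishes it (Takeuchi's Proposition 2.3 formalizes this for the constants stated). The overall strategy is to apply Weil's explicit formula to $\zeta_K$ against a carefully chosen non-negative test function and then exploit positivity of the zero-side and prime-side contributions to isolate a one-sided lower bound on $\log \Delta_K$.

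First, I would work with the completed zeta function
\[ \xi_K(s) = \Delta_K^{s/2}\bigl(\pi^{-s/2}\Gamma(s/2)\bigr)^{d}\zeta_K(s), \]
which for totally real $K$ of degree $d$ satisfies the symmetric functional equation $\xi_K(s) = \xi_K(1-s)$ already invoked in the excerpt to convert $\zeta_K(2)$ into $\zeta_K(-1)$. Taking the logarithmic derivative of the Hadamard product of $\xi_K$ over its non-trivial zeros and comparing with the logarithmic derivative of the Euler product of $\zeta_K$ produces Weil's explicit formula. Concretely, for an admissible even test function $F$ with Fourier transform $\widehat F$, this formula equates a sum $\sum_\rho \widehat F(\gamma_\rho)$ over non-trivial zeros with $\log \Delta_K$ times a normalization, minus an archimedean contribution coming from $\Gamma(s/2)$ (an integral of $\widehat F$ against the digamma function $\psi$), plus a sum over prime powers of $\mathcal{O}_K$ weighted by $F(n\log \Nr(\mathfrak p))/\Nr(\mathfrak p)^{n/2}$.

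Second, I would specialize $F$ so that both $F \geq 0$ and $\widehat F \geq 0$ on the critical line; the standard construction is a self-convolution $F = G \ast G$ with $G$ even, real, and compactly supported. With this choice the zero-side sum is non-negative, and the prime-side sum is manifestly non-negative. Rearranging the explicit formula and dropping the two non-negative terms produces an inequality of the shape
\[ \log \Delta_K \;\geq\; d \cdot A(F) \;-\; B(F), \]
where $A(F)$ is essentially the digamma integral $-\tfrac{1}{2\pi}\int \widehat F(r)\,\mathrm{Re}\,\psi(\tfrac14 + \tfrac{ir}{2})\,dr$ normalized by $F(0)$, and $B(F)$ absorbs boundary contributions from the trivial zeros.

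Finally, one optimizes $F$ to obtain explicit numerical constants, and the stated values $a = 29.099$ and $b = 8.3185$ are produced by a specific near-optimal choice, after which exponentiation gives the bound. The main obstacle, and the reason one normally defers to Odlyzko's original argument rather than reproducing it, lies entirely in this last step: the conceptual framework is classical and clean, but extracting the precise constants requires delicate numerical optimization over the test-function class and careful evaluation of the transcendental digamma integrals. Sharper bounds are known (for instance $\Delta_K^{1/d} \gtrsim 60.8$ asymptotically, and much stronger conditionally on GRH), but the form stated here is more than adequate for the subsequent reduction of trace fields of a genus-$212$ congruence surface to a finite, computable list.
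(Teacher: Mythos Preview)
Your sketch of the Odlyzko method via Weil's explicit formula is essentially correct as an outline of how such discriminant bounds are established. However, the paper does not prove this proposition at all: it is simply quoted from Takeuchi \cite[Proposition 2.3]{TK}, who in turn attributes it to Odlyzko \cite{OD}. In other words, the paper treats this as an imported black-box input rather than something to be re-derived, so there is no ``paper's own proof'' to compare against. Your write-up goes well beyond what the authors do by actually explaining the mechanism behind the constants; that is perfectly fine as exposition, and the caveat you include about the numerical optimization being the real work is appropriate, but you should be aware that for the purposes of this paper the proposition functions purely as a citation.
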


Combining Proposition \ref{TAK} with Equation \eqref{eq:4} we therefore have the following bound on the degree of an trace field.

\begin{prop}\label{prop:degupperbnd}
Let $M=\Hy^2/\mathcal{O}^1$ be a congruence surface constructed from a maximal order of genus $212$, then the trace field $K$ of $M$ has degree at most $10$.
\end{prop}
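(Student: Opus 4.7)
The plan is to combine the explicit genus formula (Equation~\eqref{eq:3}) with the Odlyzko-type discriminant bound of Proposition~\ref{TAK}, converting the single integer constraint $g-1=211$ into a numerical inequality that forces $d$ to be small.

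First, substitute $g=212$ into Equation~\eqref{eq:3} to obtain
\[
211 \;=\; \frac{|\zeta_K(-1)|}{2^{d-1}} \prod_{\mathfrak{p} \in \Ram_f(\A)} (\Nr(\mathfrak{p})-1).
\]
Each factor in the product is a positive integer (since $\Nr(\mathfrak{p})\geq 2$), so the product is bounded below by $1$ and we may discard it at the cost of passing to an inequality: $|\zeta_K(-1)|/2^{d-1} \leq 211$. This is the key analytic handle — it is independent of the set of finite ramified places and depends on $K$ alone.

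Next, apply the left-hand inequality of Equation~\eqref{eq:4}, which is essentially just $\zeta_K(2)\geq 1$ put through the functional equation. This promotes the previous bound to a ceiling on the discriminant:
\[
\frac{2\Delta_K^{3/2}}{(4\pi^2)^d} \;\leq\; \frac{|\zeta_K(-1)|}{2^{d-1}} \;\leq\; 211, \qquad \text{so} \qquad \Delta_K^{3/2} \;\leq\; \tfrac{211}{2}\,(4\pi^2)^d.
\]
I then invoke Proposition~\ref{TAK} as a competing lower bound, $\Delta_K^{3/2} > 29.099^{3d/2}\,e^{-12.478}$, and combine the two to get an inequality of the shape
\[
\left(\frac{29.099^{3/2}}{4\pi^2}\right)^{d} \;<\; \tfrac{211}{2}\, e^{12.478}.
\]
Taking logarithms and plugging in numerical values for the two explicit constants yields a finite upper bound on $d$.

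The conceptual work is minimal; the main obstacle is quantitative. The ratio $29.099^{3/2}/(4\pi^2)$ is close to $4$, so the final bound on $d$ is sensitive to the precise constants and sits right near the threshold. A naive execution of the above using only the constants $a=29.099,\ b=8.3185$ from Proposition~\ref{TAK} will bound $d$ somewhere in the low teens, and to sharpen the conclusion to $d\leq 10$ one will likely need to either invoke a tighter unconditional Odlyzko-type estimate at the borderline degrees, or rule those specific degrees out by appealing to the tabulated minimum discriminants of totally real number fields of degree $11$ and $12$, which substantially exceed the crude Takeuchi lower bound.
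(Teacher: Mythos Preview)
Your proposal is correct and follows essentially the same route as the paper: bound $|\zeta_K(-1)|/2^{d-1}\le 211$ via the product in Equation~\eqref{eq:3}, feed this through the lower inequality in \eqref{eq:4} and Proposition~\ref{TAK} to obtain $d\le 12$, and then eliminate $d\in\{11,12\}$ by appealing to the tabulated minimum discriminants of totally real fields (the paper cites Voight and Jones--Roberts for this last step). The only cosmetic difference is that the paper packages the two competing bounds as functions $C(d)$ and $D(d)$ of the root discriminant and compares them in a table, whereas you collapse everything into a single logarithmic inequality.
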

\begin{proof}
Let $d=[K:\Q]$ and define $C(d)=ae^{-b/n}$ where $a$, $b$ are as in Proposition \ref{TAK}, so that
$$C(d)\le\Delta_K^{1/d},$$
for any totally real number field $K$ of degree $d$.
Using Equation \eqref{eq:3}, we also see that 
$$\frac{2\Delta_K^{3/2}}{\pi^{2d}4^{d}} \leq \frac{|\zeta_K(-1)|}{2^{d - 1}}\prod_{\mathfrak{p} \in \Ram_f(\A)}(\Nr(\mathfrak{p}) - 1)=211,$$
and therefore
$$\Delta_K^{1/d}\le\left(4\pi^2\left(\frac{211}{2}\right)^{1/d}\right)^{2/3}=D(d).$$
It follows that the inequality $C(d)\le D(d)$ holds if and only if $d\le 12$, where we have collected the relevant truncated values of $C(d)$, $D(d)$ in Table \ref{table:Odlyz}.
When $d\in\{11,12\}$ we note that work of Voight and Jones--Roberts \cite{Voight,JR} (see also \cite{LMFDB}) shows that in fact there is no totally real number field with
$$\Delta_K^{1/d}\le D(d)$$
and from this we conclude that the inequality holds if and only if $d\le 10$.
\end{proof}

\begin{table}[t]
\centering
\begin{tabular}{||c |  c | c ||} \hline
$[K:\Q]$ &  $C(d)$ & $D(d)$  \\
\hline
5                                        & 5.12        &21.57         \\
\hline
6                                     & 7.27                  & 19.45                                \\
\hline
7                                    & 8.86                      & 18.06                            \\
\hline
8                                    & 10.28                 & 17.09                                 \\
\hline
9                                  & 11.54                & 16.37                                  \\
\hline
10                                & 12.66                & 15.81                                  \\
\hline
11                                & 13.66    & 15.37   \\
\hline
12                                 & 14.54   & 15.01   \\
\hline                    
13                                 & 15.34   & 14.72   \\
\hline
\end{tabular}
\caption{Lower and upper bounds for the root discriminant in fixed degree}
\label{table:Odlyz}
\end{table}

In fact the proof of Proposition \ref{prop:degupperbnd} shows something stronger, namely that we are able to reduce the possible list of trace fields to a finite, computable list.

\begin{cor}\label{cor:zetalist}
There are only finitely many possible trace fields for a congruence surface constructed from a maximal order of genus $212$.
Moreover this list is finite, computable, and for each field $K$ in this list $\zeta_K(-1)$ is computable via Dokchitser's algorithm.
\end{cor}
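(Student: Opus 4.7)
The plan is that this is essentially a bookkeeping consequence of Proposition \ref{prop:degupperbnd} together with classical finiteness theorems for number fields.

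First, I would revisit the proof of Proposition \ref{prop:degupperbnd} and extract not just the bound $[K:\Q] \le 10$ but also the explicit upper bound on the root discriminant that was produced along the way, namely
$$\Delta_K^{1/d} \le D(d) = \left(4\pi^2\left(\frac{211}{2}\right)^{1/d}\right)^{2/3}$$
for $d=[K:\Q]$. So every candidate trace field $K$ is a totally real number field of degree between $2$ and $10$ whose root discriminant satisfies this bound.

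Second, I would apply Hermite's theorem: for fixed degree and an absolute bound on the discriminant, there are only finitely many number fields up to isomorphism. Taking the (finite) union over $d \in \{2,\dots,10\}$ yields that the set of candidate trace fields is finite. For computability, I would cite the tables of totally real number fields of low degree produced by Voight \cite{Voight} and Jones--Roberts \cite{JR} and collected in \cite{LMFDB}; these enumerations are complete in the relevant degree/discriminant range, so one simply reads off the fields satisfying $\Delta_K^{1/d} \le D(d)$ to produce the list explicitly.

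Finally, for each field $K$ on this finite list, one invokes Dokchitser's algorithm \cite{Dokchitser} — as implemented in either SAGE \cite{Sage} or Magma \cite{Magma} — to compute $\zeta_K(-1)$ to arbitrary numerical precision. There is no real obstacle here: every step is either immediate from the preceding proposition, a standard finiteness result, or a citation of existing tables and software. The only point that requires mild care is making sure the enumerations in \cite{Voight,JR,LMFDB} actually cover the full discriminant bound $D(d)$ for all $d \le 10$, which they do.
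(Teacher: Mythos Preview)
Your proposal is correct and follows essentially the same approach as the paper: extract the root discriminant bound $D(d)$ from the proof of Proposition \ref{prop:degupperbnd}, invoke finiteness of number fields with bounded discriminant (the paper leaves Hermite's theorem implicit), cite Voight and Jones--Roberts for the explicit enumeration, and use SAGE's implementation of Dokchitser's algorithm for the zeta values. The only minor quibble is that you restrict to $2 \le d \le 10$, whereas $d=1$ (i.e., $K=\Q$) is also a candidate, but that contributes just one field and is harmless.
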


\begin{proof}
Indeed using the bound on the root discriminant $D(d)$ above, we find that in each degree there are only finitely many fields $K$ such that $\Delta_K^{1/d} \leq D(d)$.
From this, one can use the work of work of Voight and Jones--Roberts \cite{Voight,JR} to explicitly list all such number fields.
SAGE's ``zeta\_function()'' command \cite{Sage} then gives the relevant values of $\zeta_K(-1)$ for these fields.
\end{proof}

\noindent We have collected the list of all such fields with their relevant values of $\zeta_K(-1)$ \textcolor{blue}{\href{http://pages.iu.edu/~nimimill/zetatables.pdf}{here}}.
The table is extremely long and cannot be included in this paper, however we will pick out the relevant fields for the paper in the following sections and in the appendix.

%% \begin{table}[t]
%% \centering
%% \begin{tabular}{||c | c | c | c ||} \hline
%% $[K:\Q]$ & $\Delta_K$ & $C(d)$ & $D(d)$  \\
%% \hline
%% 5                      & 5089                  & 5.12        &21.57         \\
%% \hline
%% 6                      & 148111               & 7.27                  & 19.45                                \\
%% \hline
%% 7                      & 4309906              & 8.86                      & 18.06                            \\
%% \hline
%% 8                      & 125413956              & 10.28                 & 17.09                                 \\
%% \hline
%% 9                      & 3649420715            & 11.54                & 16.37                                  \\
%% \hline
%% 10                     & 106194493407           & 12.66                & 15.81                                  \\
%% \hline
%% 11                     & 3090153563670           & 13.66    & 15.37   \\
%% \hline
%% 12                     & 89920378549235            & 14.54   & 15.01   \\
%% \hline                    
%% 13                     & 2616593095404191            & 15.34   & 14.72   \\
%% \hline
%% \end{tabular}
%% \caption{Lower and upper bounds for the discriminant and root discriminant in fixed degree}
%% \label{table:Odlyz}
%% \end{table}

%~~~~~~~~~~~~~~~~~~~~~~~~~~~~~~~~~~~~~~~~~~~~~~~~~~~
%~~~~~~~~~~~~~~~~~~~~~~~~~~~~~~~~~~~~~~~~~~~~~~~~~~~
%~~~~~~~~~~~~~~~~~~~~~~~~~~~~~~~~~~~~~~~~~~~~~~~~~~~

\section{Congruence Conditions on Torsion in Quaternion Algebras}

\noindent We now explore some congruence conditions on primes in totally real number fields $K$.
These congruence conditions will be useful in connecting splitting properties of primes $\Fr{p}$ in $\Ram_f(\A)$ to torsion in $P(\mathcal{A}^1)$ over $K$ via Theorem \ref{tor}.
In particular, they will allow us to show that many trace fields cannot support congruence surfaces from maximal orders of genus $212$, including $\Q$.

For this, we first give some preliminary lemmas controlling the inertial degree of primes in the fields we are interested in.

\begin{lemma} \label{lem:2split}
Let $K$ be a totally real number field and $\p \subset \mathcal{O}_K$ a non-dyadic prime ideal.
If $\p$ splits in $K(i)/K$ then $\Nr(\p) \equiv 1 \pmod 4$.
\end{lemma}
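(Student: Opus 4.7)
The plan is to translate the splitting condition on $\p$ into a statement about the residue field $\mathcal{O}_K/\p$ and then invoke a classical fact about squares in finite fields of odd characteristic.

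First, I would observe that because $\p$ is non-dyadic, the residue characteristic is odd, so the minimal polynomial $x^2+1$ of $i$ over $K$ has discriminant $-4$ which is a unit modulo $\p$. Therefore $K(i)/K$ is unramified at $\p$, and the splitting behavior of $\p$ in $K(i)/K$ is governed by the factorization of $x^2+1$ in the residue field $\mathcal{O}_K/\p$. Specifically, by Kummer--Dedekind (applied to the integral element $i$ with conductor away from $\p$), $\p$ splits in $K(i)/K$ if and only if $x^2+1$ has a root modulo $\p$, equivalently, if and only if $-1$ is a square in the finite field $\mathcal{O}_K/\p$.

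Next, I would use that the residue field has order $\Nr(\p) = q$, where $q$ is a power of the odd rational prime lying under $\p$. The multiplicative group $\F_q^\times$ is cyclic of order $q-1$, and $-1$ is its unique element of order $2$. Hence $-1$ lies in the image of the squaring map $x \mapsto x^2$ (which has image of index $2$) if and only if $4 \mid q-1$, i.e.\ $\Nr(\p) \equiv 1 \pmod{4}$. Combining this with the previous step yields the claim.

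I do not expect any real obstacle here: the only subtlety is making sure the unramifiedness of $K(i)/K$ at non-dyadic $\p$ is justified so that Kummer--Dedekind applies cleanly, but this is immediate from $\disc(x^2+1) = -4$ being a $\p$-unit. The proof should be only a few lines.
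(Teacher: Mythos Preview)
Your argument is correct. It is, however, a genuinely different route from the paper's. The paper proves the lemma by climbing through the tower $\Q\subset\Q(i)\subset K(i)$: writing $p$ for the rational prime under $\p$, the case $p\equiv 1\pmod 4$ is immediate, while for $p\equiv 3\pmod 4$ the paper uses that $p$ is inert in $\Q(i)$ together with multiplicativity of inertial degrees in towers to force $f(\p\mid p)$ to be even, whence $\Nr(\p)=p^{f(\p\mid p)}\equiv 1\pmod 4$. Your proof instead works purely locally at $\p$: unramifiedness at non-dyadic primes plus Kummer--Dedekind reduces splitting of $\p$ to $-1$ being a square in $\mathcal{O}_K/\p$, and then the cyclicity of $\F_q^\times$ finishes it. Your approach is shorter and more self-contained, and it in fact yields the converse implication for free; the paper's tower argument, on the other hand, is set up so that the companion lemmas on $K(\sqrt{-3})$ and on dyadic/triadic ramification indices follow by literally the same template.
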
 
\begin{proof}
Let $p$ be the unique rational prime such that $\p\mid p$.
The lemma is then clear if $p \equiv 1 \pmod 4$, as $\Nr(\p)=p^f$ for some $f\in\N$.
By the non-dyadic assumption, it therefore suffices to consider when $p \equiv 3 \pmod 4$, where we point out that $\Nr(\p) \equiv 1 \pmod 4$ if and only if $f(\p|p) \equiv 0 \pmod 2$.
As $\p$ splits in $K(i)$, there are primes $\q,\overline{\q}$ of $K(i)$ lying over $\p$ such that
$$1 = f(\q\mid \p)=f(\overline{\q}\mid \p).$$
Moreover inertial degrees are multiplicative in towers and since $
\Q\subset\Q(i)\subset K(i)$ and $p\equiv 3\pmod{4}$, there is an inert prime $\p'$ of $\Q(i)/\Q$ such that
$$f(\q\mid p)=f(\q\mid\p')f(\p'\mid p)=2f(\q\mid \p'),$$
with a similar statement for $\overline{\q}$.
In particular $f(\q\mid p)=f(\p\mid p)$ is even, giving the lemma.
\end{proof}

\begin{lemma}\label{lem:3split}
Let $K$ be a totally real number field and $\p \subset \mathcal{O}_K$ a prime ideal not lying over the rational prime $3$.
If $\p$ splits in $K(\sqrt{-3})/K$ then $\Nr(\p) \equiv 1 \pmod 3$.
\end{lemma}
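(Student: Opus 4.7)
The plan is to mirror the proof of Lemma \ref{lem:2split} almost verbatim, replacing $\Q(i)$ with $\Q(\sqrt{-3})$ and the mod $4$ congruence with a mod $3$ congruence. Let $p$ denote the unique rational prime with $\p \mid p$; by hypothesis $p \neq 3$. As in that lemma, the case $p \equiv 1 \pmod 3$ is trivial, since then $\Nr(\p) = p^{f(\p \mid p)} \equiv 1 \pmod 3$ regardless of the inertial degree. So the interesting case is $p \equiv 2 \pmod 3$, and here I would observe that $\Nr(\p) \equiv 1 \pmod 3$ if and only if $f(\p \mid p)$ is even, because $p^2 \equiv 1 \pmod 3$.

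To force $f(\p \mid p)$ to be even, I would use the tower $\Q \subset \Q(\sqrt{-3}) \subset K(\sqrt{-3})$ together with multiplicativity of inertial degrees in exactly the same way as in the preceding lemma. The key local input is the classical fact that a rational prime $p \neq 3$ is inert in $\Q(\sqrt{-3})/\Q$ precisely when $p \equiv 2 \pmod 3$; equivalently, $p$ splits in $\Q(\sqrt{-3})$ iff $\leg{-3}{p} = 1$ iff $p \equiv 1 \pmod 3$ by quadratic reciprocity (or by recognizing $\Q(\sqrt{-3})$ as the cyclotomic field $\Q(\xi_3)$). Thus under our assumption $p \equiv 2 \pmod 3$, there is a unique prime $\p'$ of $\Q(\sqrt{-3})$ above $p$ with $f(\p' \mid p) = 2$.

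Now pick a prime $\q$ of $K(\sqrt{-3})$ with $\q \mid \p$; by the splitting hypothesis, $f(\q \mid \p) = 1$. Let $\p' = \q \cap \Q(\sqrt{-3})$, the prime lying below $\q$ in $\Q(\sqrt{-3})$. Applying multiplicativity to the tower one way gives
\[
f(\q \mid p) = f(\q \mid \p)\,f(\p \mid p) = f(\p \mid p),
\]
and the other way gives
\[
f(\q \mid p) = f(\q \mid \p')\,f(\p' \mid p) = 2\,f(\q \mid \p').
\]
Hence $f(\p \mid p)$ is even, which completes the proof.

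I do not anticipate any genuine obstacle. The one point worth being careful about is the justification that $p \equiv 2 \pmod 3$ is inert in $\Q(\sqrt{-3})$, which is why the hypothesis $\p \nmid 3$ is needed (to avoid the ramified prime), exactly analogous to the non-dyadic hypothesis in Lemma \ref{lem:2split}.
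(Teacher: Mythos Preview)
Your proof is correct and is exactly the argument the paper intends: the paper's own proof simply says to repeat the proof of Lemma~\ref{lem:2split} with $p\equiv 1\pmod 4$ replaced by $p\equiv 1\pmod 3$, which is precisely what you have written out in full. The only cosmetic difference is that you explicitly justify why $p\equiv 2\pmod 3$ is inert in $\Q(\sqrt{-3})$, which the paper leaves implicit.
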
 
\begin{proof}
The proof is identical to that of Lemma \ref{lem:2split}, except one replaces the condition that $p\equiv 1\pmod{4}$ by the condition that $p\equiv 1\pmod{3}$.
\end{proof}

\noindent We are also interested in the ramification index of primes in these fields.
We therefore have the following two lemmas, concerning dyadic and triadic primes.

\begin{lemma} \label{lem:2ram}
Suppose $\p$ is a dyadic prime in a number field $K$ that splits in $K(i)/K$, then $e(\p | 2) \equiv 0 \pmod 2$.
\end{lemma}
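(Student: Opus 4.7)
The plan is to mimic the structure of the proof of Lemma \ref{lem:2split}, but with ramification indices replacing inertial degrees. The central arithmetic input is that the rational prime $2$ is (wildly) ramified in $\Q(i)/\Q$ with ramification index exactly $2$, namely $2\mathcal{O}_{\Q(i)} = (1+i)^2$.

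First, I would pick a prime $\q \subset \mathcal{O}_{K(i)}$ lying over $\p$. Since $\p$ splits in $K(i)/K$ by hypothesis, we have $e(\q \mid \p) = 1$. Applying multiplicativity of ramification indices in the tower $\Q \subset K \subset K(i)$ then gives
\[
e(\q \mid 2) \;=\; e(\q \mid \p)\, e(\p \mid 2) \;=\; e(\p \mid 2).
\]

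Next, I would route the same calculation through $\Q(i)$. Let $\p'$ be the unique prime of $\Q(i)$ above $2$, so that $e(\p' \mid 2) = 2$, and let $\p'$ sit below $\q$ in the tower $\Q \subset \Q(i) \subset K(i)$. Multiplicativity of the ramification index in this second tower yields
\[
e(\q \mid 2) \;=\; e(\q \mid \p')\, e(\p' \mid 2) \;=\; 2\, e(\q \mid \p').
\]
Combining the two expressions for $e(\q \mid 2)$ gives $e(\p \mid 2) = 2\, e(\q \mid \p')$, which is even, as desired.

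There is no real obstacle here; the only subtlety is verifying that $\p'$ sits below the chosen $\q$, which is automatic since $\q$ lies over the rational prime $2$ and $\p'$ is the unique prime of $\Q(i)$ above $2$. The proof is genuinely parallel to Lemma \ref{lem:2split}, with the roles of ``inert in $\Q(i)/\Q$ when $p \equiv 3 \pmod 4$'' and ``ramified in $\Q(i)/\Q$ when $p = 2$'' interchanged, so that the factor of $2$ in the final equation now comes from ramification rather than residue degree.
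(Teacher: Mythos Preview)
Your proof is correct and follows essentially the same approach as the paper: both use multiplicativity of ramification indices in towers together with the fact that $2$ ramifies in $\Q(i)/\Q$. The paper's version is simply a terser rendering of the same two-tower argument you wrote out in full.
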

\begin{proof}
As 2 ramifies in $\Q(i)$, we have that $e(\q|2) \equiv 0 \pmod 2$ for any prime $\q$ in $K(i)$ lying over $\p$.
By assumption $\p = \q\overline{\q}$ in $K(i)/K$ giving $e(\q|\p) = e(\overline{q}|\p) = 1$.
By multiplicativity of the ramification index in towers it then must hold that $e(\p|2) \equiv 0 \pmod 2$.
\end{proof}

\begin{lemma}\label{lem:3ram}
Suppose $\p$ is a prime lying over the rational prime $3$ in a number field $K$ that splits in $K(\sqrt{-3})/K$, then $e(\p | 2) \equiv 0 \pmod 2$.
\end{lemma}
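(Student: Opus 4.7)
The plan is to mirror the proof of Lemma \ref{lem:2ram} almost verbatim, swapping the role of $\Q(i)/\Q$ (where $2$ ramifies) with $\Q(\sqrt{-3})/\Q$ (where $3$ ramifies). First I would note that the statement as written is about $e(\p|2)$, but in context (a prime lying over $3$, the $\sqrt{-3}$-analogue of Lemma \ref{lem:2ram}) it must really be $e(\p|3)$; the argument proves this divisibility.

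The key input is that $\Delta_{\Q(\sqrt{-3})}=-3$, so $3$ is the unique rational prime that ramifies in $\Q(\sqrt{-3})/\Q$, and its ramification index there is exactly $2$ (in fact $(3)=(\sqrt{-3})^2$ in $\Z[\sqrt{-3}]$). Consequently, for any prime $\Fr{P}$ of $\Q(\sqrt{-3})$ over $3$, $e(\Fr{P}\mid 3)=2$, and so for any prime $\q$ of $K(\sqrt{-3})$ lying over $3$, multiplicativity of ramification indices in the tower $\Q\subset\Q(\sqrt{-3})\subset K(\sqrt{-3})$ gives $e(\q\mid 3)=2\,e(\q\mid\Fr{P})\equiv 0\pmod 2$.

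Next I would use the splitting hypothesis. Since $\p$ splits in $K(\sqrt{-3})/K$, we may write $\p\intring{K(\sqrt{-3})}=\q\overline{\q}$ with $e(\q\mid\p)=e(\overline{\q}\mid\p)=1$. Applying multiplicativity once more in the tower $\Q\subset K\subset K(\sqrt{-3})$, we obtain
\[
e(\q\mid 3)=e(\q\mid\p)\,e(\p\mid 3)=e(\p\mid 3).
\]
Combining this with the previous paragraph yields $e(\p\mid 3)\equiv 0\pmod 2$, as desired.

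There is no real obstacle here: the proof is a direct parallel of Lemma \ref{lem:2ram}, with the only substantive input being the ramification behavior of $3$ in $\Q(\sqrt{-3})$. The only minor subtlety is the apparent typo ``$e(\p\mid 2)$'' in the statement, which should be read as ``$e(\p\mid 3)$''; this is what the natural argument actually delivers and what is needed downstream to dovetail with Lemma \ref{lem:3split} in the same way that Lemmas \ref{lem:2split} and \ref{lem:2ram} work together.
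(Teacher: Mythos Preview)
Your proof is correct and follows exactly the approach the paper intends: the paper's proof literally reads ``The proof is identical to Lemma \ref{lem:2ram} with $K(i)$ replaced by $K(\sqrt{-3})$,'' and you have carried out precisely that substitution, correctly noting along the way that ``$e(\p\mid 2)$'' in the statement is a typo for ``$e(\p\mid 3)$.''
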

\begin{proof}
The proof is identical to Lemma \ref{lem:2ram} with $K(i)$ replaced by $K(\sqrt{-3})$.
\end{proof}

At this point, we are in a position to eliminate the case of trace field $\Q$.

\begin{prop}\label{prop:noQ}
There is no congruence surface constructed from a maximal order of genus $212$ with trace field $\Q$.
\end{prop}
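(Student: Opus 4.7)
The natural approach is to specialize Equation \eqref{eq:3} to $K = \Q$ and derive a purely numerical impossibility. Since $[\Q:\Q] = 1$ and $\zeta_{\Q}(-1) = -\frac{1}{12}$, the equation with $g = 212$ reduces to
\[ 211 = \frac{1}{12} \prod_{p \in \Ram_f(\A)}(p-1), \]
i.e.\ $\prod_{p \in \Ram_f(\A)}(p-1) = 2532 = 2^2 \cdot 3 \cdot 211$. In particular each $p-1$ must be a positive divisor of $2532$, so the set of candidate primes is already forced to be finite before any torsion considerations enter.

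The next step I would take is to enumerate the twelve divisors of $2532$ and keep only those of the form $p - 1$ for $p$ a rational prime. The divisors $3$, $211$, $422$, $633$, $844$, $1266$, $2532$ all yield composites when incremented by one (for instance $212 = 4 \cdot 53$ and $2533 = 17 \cdot 149$), leaving only $\{1, 2, 4, 6, 12\}$, corresponding to the primes $\{2, 3, 5, 7, 13\}$. Hence $\Ram_f(\A) \subseteq \{2,3,5,7,13\}$.

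To conclude, I would invoke the parity constraint on ramification: since $\Q$ has a unique infinite place, $\A$ must split there, so $|\Ram_f(\A)| = |\Ram(\A)|$ is even. Moreover $\A$ must be a division algebra (otherwise $P(\A^1)$ fails to be a cocompact lattice), so $|\Ram_f(\A)| \geq 2$. Thus $\Ram_f(\A)$ is an even-sized subset of $\{2,3,5,7,13\}$ of cardinality $2$ or $4$, and the corresponding product $\prod(p-1)$ is bounded above by the product of the four largest values in $\{1,2,4,6,12\}$, namely $2 \cdot 4 \cdot 6 \cdot 12 = 576 < 2532$. This contradicts the required value and finishes the proof.

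The argument is essentially a divisibility enumeration, and no step presents a serious obstacle. The only points requiring care are (i) verifying primality of $d + 1$ for each of the twelve divisors $d$ of $2532$, and (ii) correctly combining the parity of $|\Ram(\A)|$ with the division-algebra requirement to rule out $|\Ram_f(\A)| \in \{0,1,3,5\}$. One could further shrink the candidate set by appealing to the torsion-elimination criteria of Theorem \ref{tor} together with Lemmas \ref{lem:2split}--\ref{lem:3ram} (for example, since $2$ and $3$ ramify in $\Q(i)$ and $\Q(\sqrt{-3})$ respectively, neither can eliminate $2$- or $3$-torsion over $\Q$), but such refinements are superfluous here since the crude volume comparison $576 < 2532$ already yields the contradiction.
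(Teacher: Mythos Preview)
Your proof is correct and takes a genuinely different route from the paper's. The paper argues via torsion: since $\cos(\pi/2),\cos(\pi/3)\in\Q$, Theorem \ref{tor} forces $\Ram_f(\A)$ to contain a prime $p\equiv 1\pmod{12}$, or two primes $p\equiv 1\pmod 3$ and $q\equiv 1\pmod 4$, and both options are shown to be incompatible with $\prod(p-1)=2532$ by elementary congruence/primality checks. Your argument instead bypasses torsion entirely: after the divisor enumeration pins down $\Ram_f(\A)\subseteq\{2,3,5,7,13\}$, you invoke the parity of $|\Ram(\A)|$ (together with $\A$ splitting at the unique real place of $\Q$ and being a division algebra) to bound the product by $2\cdot 4\cdot 6\cdot 12=576<2532$.

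Your approach is shorter and more self-contained for this particular case, since it uses only the volume formula and the evenness of $|\Ram(\A)|$. The paper's torsion-based method, on the other hand, is the one that generalizes: over higher-degree fields the parity constraint on $|\Ram_f(\A)|$ depends on $[K:\Q]\bmod 2$ and there may be many primes of small norm, so a crude product bound no longer suffices, whereas the congruence obstructions from Lemmas \ref{lem:2split}--\ref{lem:3ram} continue to bite (as in Theorem \ref{1/n} and Proposition \ref{quadprop}). Your closing remark acknowledging this is apt.
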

\begin{proof}
When $K=\Q$, Equation \eqref{eq:3} shows that
$$211 = \frac{1}{12}\prod_{p \in \Ram_f(\A)}(p - 1),$$
for a quaternion algebra $\mathcal{A}$ over $\Q$.
The condition that $\mathcal{A}$ supports a congruence surface constructed from a maximal order is the condition that $\Q(\xi_{2n})$ does not embed in $\A$ for any natural number $n$ (see Theorem \ref{tor}).
Equivalently, this happens if and only if for every $n$ such that $\cos(\pi/n)\in \Q$, any $p\in\Ram_f(\A)$ splits in $\Q(\xi_{2n})$.
Critically, $\cos(\pi/2),\cos(\pi/3)\in\Q$ and we must therefore ensure that there is at least one prime in $\Ram_f(\A)$ that splits in $\Q(\xi_4)$ (resp. $\Q(\xi_6)$).

Since no primes ramify in $\Q$, Lemmas \ref{lem:2split}--\!\! \ref{lem:3ram} show that this is equivalent to either having a single prime $p\in\Ram_f(\A)$ such that $p\equiv 1\pmod{12}$ or distinct primes $p,q\in\Ram_f(\A)$ such that $p\equiv 1\pmod{3}$ and $q\equiv 1\pmod{4}$.
We will show that neither of these are possible.
Indeed in the former case, writing $p=12 k+1$ for some $k>0$, we see that
$$ 211 = k \prod_{q \in \Ram_f(\A)\backslash \{p\}}(q - 1),$$
and therefore $k=1$ or $k=211$.
However, both choices of $k$ imply that either $211+1=212$ or $12(211)+1=2533$ is a prime, which is clearly false.
In the latter case, write $p=3k+1$ and $q=4k'+1$, then again we see that
$$ 211 = kk' \prod_{\ell \in \Ram_f(\A)\backslash \{p,q\}}(\ell - 1),$$
where we include the possibility that $\Ram_f(\A)=\{p,q\}$ and the latter product is over an empty set.
By the above $k\in\{1,211\}$, but if $k=1$ then $p=3k+1=4$ is not prime and if $k=211$ then $p=3(211)+1$ is even and so also not prime.
Therefore we conclude that there is no such quaternion algebra $\A$ over $\Q$ and hence no such congruence surface, completing the proof.
\end{proof}

We conclude this section by noting that one can vastly generalize the analysis of Proposition \ref{prop:noQ} to be able to handle fields $K$ such that $\frac{|\zeta_K(-1)|}{2^{[K:\Q] - 1}} = \frac{1}{n}$ for some $n \in \N$.
This will be extremely useful for us in ruling out congruence surfaces constructed from maximal orders where the ramification of the quaternion algebra contributes $211$ to the righthand side of Equation \eqref{eq:3} via its ramification set.
In particular the conditions in the theorem mimic constraints coming from our computation of the finite list of all possible trace fields, as mentioned at the end of Section \ref{sec:zetabounds}.
The statement of the theorem is a bit involved, however we will need all of the possibilities listed therein.

\begin{theorem}\label{1/n}
Assume that there is a congruence surface constructed from a maximal order of genus $212$ such that $\frac{|\zeta_K(-1)|}{2^{[K:\Q] - 1}} = \frac{1}{n}$, where $n$ is any natural number $2\le n\le 84$.
Then $n$ is divisible by a natural number $k$ such that $k\in\{10,22,42,52,58,70,72\}$.

Moreover if $2\nmid\Delta_K$, then we must have that $n\in\{72,80\}$.
In the case that $n=72$, the associated quaternion algebra $\A$ has finite ramification set
$$\Ram_f(\A)=\{\p,\q_1,\dots,\q_s\},$$
where $\Nr(\p)=15193$ and $\Nr(\q_i)=2$ for all $i$.
When $n=80$, then $9\mid\Delta_K$ and the associated quaternion algebra $\A$ has finite ramification
$$\Ram_f(\A)=\{\p,\p',\q_1,\dots,\q_s\},$$
with $\Nr(\p)=2111$, $\Nr(\p')\in\{3,9\}$, and $\Nr(\q_i)=2$ for all $i$.

In the above equations we allow for the possibility that $s=0$ and therefore $\Ram_f(\A)=\{\p\}$ (respectively $\Ram_f(\A)=\{\p,\p'\}$).
\end{theorem}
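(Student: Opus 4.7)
The plan is to convert the hypothesis $\frac{|\zeta_K(-1)|}{2^{d-1}} = \frac{1}{n}$ into the Diophantine identity
\begin{equation}\label{eqn:nprod}
211\, n \;=\; \prod_{\p \in \Ram_f(\A)} (\Nr(\p) - 1)
\end{equation}
via Equation~\eqref{eq:3}, and then read off the structure of $\Ram_f(\A)$ by enumerating factorizations of $211\,n$ into terms of the shape $(\text{prime power}) - 1$ and checking them against the torsion-killing conditions supplied by Theorem~\ref{tor} together with Lemmas~\ref{lem:2split}--\ref{lem:3ram}.

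For the first statement, primality of $211$ forces some $\p_0 \in \Ram_f(\A)$ with $\Nr(\p_0) \equiv 1 \pmod{211}$, and $n \le 84$ gives $\Nr(\p_0) \le 17725$. Since $\Nr(\p_0)$ is a rational prime power, I would enumerate every such power in this range; a direct check (the multiplicative order of each small rational prime modulo $211$ is large enough to rule out any $p^f$ with $f \ge 2$ below $17725$) yields the seven rational primes $\{2111,\,4643,\,8863,\,10973,\,12239,\,14771,\,15193\}$. Their corresponding values $k = (\Nr(\p_0)-1)/211$ are precisely $\{10, 22, 42, 52, 58, 70, 72\}$, each of which divides $n$.

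For the \emph{moreover} clause, assume $2 \nmid \Delta_K$, so every dyadic prime of $K$ is unramified. Any dyadic $\p \in \Ram_f(\A)$ then has $e(\p|2) = 1$, and Lemma~\ref{lem:2ram} prevents it from splitting in $K(i)$. Since the surface is torsion free, Theorem~\ref{tor} requires some prime of $\Ram_f(\A)$ to split in $K(i)$; that prime must therefore be non-dyadic, and Lemma~\ref{lem:2split} forces its norm to be $\equiv 1 \pmod 4$. Feeding this back into \eqref{eqn:nprod} yields $4 \mid n$, and intersecting with the divisibility from the previous paragraph and $n \le 84$ leaves only the candidates $n \in \{20, 40, 44, 52, 60, 72, 80, 84\}$.

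What remains is a case-by-case elimination on this shortened list. For each candidate $n$, I would list the factorizations $211\,n = (\Nr(\p_0)-1) \prod_i (\Nr(\p_i)-1)$ with $\Nr(\p_0)$ from the seven-prime list and each $\Nr(\p_i)$ a rational prime power, then check both torsion-killing conditions using the residues of the $\Nr(\p_i)$ modulo $12$ together with Lemmas~\ref{lem:2split}--\ref{lem:3ram}. A direct computation shows that for each $n \in \{20, 40, 44, 52, 60, 84\}$, no factorization supplies both a $2$-torsion killer and a $3$-torsion killer simultaneously. For $n = 72$ the factorization is forced to be $\Nr(\p_0) = 15193$ together with arbitrarily many harmless dyadic primes of norm $2$, and $15193 \equiv 1 \pmod{12}$ kills both torsions. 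For $n = 80$ the main prime must be $\Nr(\p_0) = 2111$ with residual factor $8$; matching this to a single triadic prime $\p'$ yields $\Nr(\p') = 9$ with $e(\p'|3) = 2$, which by Theorem~\ref{ram} forces $9 \mid \Delta_K$. I expect the main obstacle to be the $n = 80$ sub-case, where ruling out the competing factorization $8 = 2 \cdot 4$ (which a priori would introduce a non-dyadic prime of norm $5$ alongside a triadic prime of norm $3$) requires pairing the triadic ramification data with the finite list of admissible trace fields supplied by Corollary~\ref{cor:zetalist}.
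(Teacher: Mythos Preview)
Your overall strategy coincides with the paper's: convert to $211n=\prod(\Nr(\p)-1)$, use primality of $211$ to isolate a prime $\p_0$ with $\Nr(\p_0)=211k+1$, enumerate the seven admissible $k$, and then filter with the torsion lemmas under $2\nmid\Delta_K$. Two points of divergence are worth noting.

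First, the paper organizes the second half more economically. Rather than deducing only $4\mid n$ and then case-checking eight values, it observes that every $k\in\{10,22,42,58,70\}$ satisfies $k\equiv 2\pmod 4$, so $\Nr(\p_0)\equiv 3\pmod 4$ and $\p_0$ itself cannot kill $2$-torsion. The $2$-torsion killer is therefore a \emph{different} prime $\q$ with $\Nr(\q)=4k'+1$, and since $k$ is already even, $(211k)(4k')\mid 211n$ gives $8\mid n$. This instantly disposes of $n\in\{20,44,60,84\}$, leaving only $n\in\{40,80\}$ together with the two exceptional values $n=k\in\{52,72\}$ (where $k\equiv 0\pmod 4$). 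Your longer list is not wrong, just less efficient.

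Second, your treatment of $n=80$ misreads the target. You propose to \emph{rule out} the competing factorization $8=4\cdot 2$ (a norm-$5$ prime together with a norm-$3$ prime) by appealing to the computed field tables of Corollary~\ref{cor:zetalist}. But the theorem does not exclude this sub-case: it explicitly lists $\Nr(\p')\in\{3,9\}$. The paper's argument is instead that in \emph{either} sub-case the $3$-torsion killer must be the triadic prime (of norm $9$ when $k'=2$, of norm $3$ when $k'=1$), and Lemma~\ref{lem:3ram} forces its ramification index over $3$ to be even; Theorem~\ref{ram} is then invoked to extract $9\mid\Delta_K$. No appeal to the explicit field lists is made---the statement is meant to be field-independent, and importing Corollary~\ref{cor:zetalist} here would undercut that. (Incidentally, Lemma~\ref{lem:3ram} only gives $e(\p'\mid 3)$ even, not $e(\p'\mid 3)=2$ as you wrote.)
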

\begin{proof}
Equation \eqref{eq:3} shows that
$$211n = \prod_{\p \in \Ram_f(\A)}(\Nr(\p) - 1),$$
and therefore $\mathcal{A}$ must ramify at some prime $\p$ such that $\Nr(\p) = 211k + 1$ for which $k \mid n$ and $\Nr(\p)$ is a prime power.
One can easily check that the only such $k$ are $k\in\{10,22,42,52,58,70,72\}$.
This gives the first claim.

For the second, first assume that $k\equiv 2\pmod{4}$, which is the case for all $k\neq 52,72$ in the above set.
Then $\Nr(\p)\equiv 3\pmod{4}$ and therefore by Lemma \ref{lem:2split} ramifying at such a prime does not eliminate $2$-torsion in $P(\A^1)$.
Moreover when $2 \nmid \Delta_K$, Lemmas \ref{lem:2split} and \ref{lem:2ram} show that $\A$ must ramify at some $\q$ such that $\Nr(\q) \equiv 1 \pmod 4$ in order to eliminate $2$-torsion in $P(\A^1)$.
Writing $\Nr(\q) = 4k'+1$, we then have 
$$211n= (211k)4k' \prod_{\mathfrak{r} \in \Ram_f(\A)\setminus \{\p,\q\}}(\Nr(\mathfrak{r}) - 1),$$
where the latter product is potentially over an empty set.
In particular, since each $k$ is even, we see that $n\equiv 0\pmod 8$.
This is impossible in all cases except for when $k=10$ and $n\in\{40,80\}$, where we then have that $k'\in\{1,2\}$.

If $n=40$, then $k'=1$ by the above equation and consequently $\Nr(\q)=5$.
By Lemma \ref{lem:3split} such a prime cannot eliminate $3$-torsion in $P(\A^1)$.
As all other primes $\Fr{r}$ in $\Ram_f(\A)$ must have $\Nr(\Fr{r})=2$, we see that in fact no prime in $\Ram_f(\A)$ eliminates $3$-torsion in $P(\A^1)$.
When $n=80$, $k'\in\{1,2\}$ and hence either $\Nr(\q)=9$ or $\Nr(\q)=5$ and there is some prime $\p'\in\Ram_f(\A)$ that has $\Nr(\p')=3$ and eliminates $3$-torsion in $P(\A^1)$.
In such a setting, Lemma \ref{lem:3ram} shows that the ramification index of such a prime must be contained in $2\N$.
Consequently Theorem \ref{ram} shows that $9\mid\Delta_K$ as required.

The remaining cases to analyze are when $n=k\in\{52,72\}$.
In this setting we have 
$$1= \prod_{\mathfrak{q} \in \Ram_f(\A)\setminus \{\p\}}(\Nr(\mathfrak{\q}) - 1).$$
Therefore all $\q\in\Ram_f(\A)\setminus\{\p\}$ must have $\Nr(\q)=2$.
Notice that if $k=n=52$, $\Nr(\p)\equiv 2 \pmod{3}$ and therefore Lemma \ref{lem:3split} shows that $\p$ does not eliminate $3$-torsion in $P(\A^1)$.
Lemmas \ref{lem:3split} and \ref{lem:3ram} then show immediately that $P(\A^1)$ will always contain $3$-torsion for such a ramification set.
When $k=n=72$, $\Nr(\p)\equiv 1\pmod{12}$ which eliminates both $2$ and $3$-torsion in $P(\A^1)$, as required.
\end{proof}

From our computed list of number fields, we collect in Table \ref{table:badzeta} the fields such that $ \frac{|\zeta_K(-1)|}{2} = \frac{211}{n}$ for some $n>1$ or $ \frac{|\zeta_K(-1)|}{2} = \frac{1}{n}$ for some $n$ such that $n$ is divisible by some $ k \in \{10,22,42,52,58,70,72\}$, since Equation \eqref{eq:3} and Theorem \ref{1/n} show that only these fields could yield a genus 212 surface.
Furthermore, we remark that Theorem \ref{1/n} shows the impossibilty of genus 212 surface over any of the fields with $ \frac{|\zeta_K(-1)|}{2} =  \frac{1}{n} $ in Table \ref{table:badzeta}. 
The remaining sections will rule out such a construction over all other fields. 

\begin{rem}\label{rem:birchtate}
If one assumes the Birch--Tate conjecture, then the above analysis can be made in a much more systematic way.
In particular, the Birch--Tate conjecture asserts that for totally real number fields $K$
\begin{equation}\label{eqn:birchtate}
|\zeta_K(-1)|=\frac{|K_2(\mathcal{O}_K)|}{w_2(K)},
\end{equation}
where
$$w_2(K)=\max\{n\in\N\mid K(\xi_n)/K\text{ is a 2-elementary abelian extension}\}.$$
This conjecture is a theorem when $K$ is an abelian extension of $\Q$ and for general number fields is known to hold up to a power of $2$.
Results of Tate \cite[Theorems 6.1 and 6.3]{Tate} show that the numerator of Equation \eqref{eqn:birchtate} is divisible by $2^{[K:\Q]}$ and moreover give a general method for computing the power of $2$ which divides the numerator of Equation \eqref{eqn:birchtate}.
In particular, writing
$$\frac{|\zeta_K(-1)|}{2^{[K:\Q]}}=\frac{s}{t},$$
in reduced form and assuming the Birch--Tate conjecture we see that $t\mid w_2(K)$.
Moreover it is then apparent that if $K\cap \Q(\xi_{n})=\Q$ for all $n\in\N$, then $t\mid 12$.
When $K\cap \Q(\xi_{n})\neq\Q$ for some $n\in\N$, it will produce extra congruence conditions on $\Ram_f(\A)$ for $P(\A^1)$ to be torsion free, as there will then be cyclotomic extensions $\Q(\xi_{2n'})$ for $n'\neq 2,3$ such that $[K(\xi_{2n'}):K]= 2$.
One can see this appear explicitly in the analysis of Section \ref{sec:quadfield} when we deal with real quadratic extensions (notice that the Birch--Tate conjecture is a theorem in this setting since the extension is abelian).
Equation \eqref{eqn:birchtate} is one reason for the restriction on the $n$ we consider in Theorem \ref{1/n}, since these are relevant denominators when $[K:\Q]\le 10$.
\end{rem}

%~~~~~~~~~~~~~~~~~~~~~~~~~~~~~~~~~~~~~~~~~~~~~~~~~~~
%~~~~~~~~~~~~~~~~~~~~~~~~~~~~~~~~~~~~~~~~~~~~~~~~~~~
%~~~~~~~~~~~~~~~~~~~~~~~~~~~~~~~~~~~~~~~~~~~~~~~~~~~

\section{The case of quadratic trace field}\label{sec:quadfield}
In this section, we show that there is no genus 212 congruence surface constructed from a maximal order with quadratic trace field.
To do so we use work of Zagier \cite{DZ}, who showed that there is an explicit formula for $\zeta_K(-1)$ for all real quadratics $K=\Q(\sqrt{d})$.
Moreover Zagier shows that $\frac{\zeta_K(-1)}{2}$ is a positive rational number with denominator dividing $24$ except in the case that $K$ is $\Q(\sqrt{5})$.
It turns out that this denominator divides $12$ unless $\Q(\sqrt{2})$ or $\Q(\sqrt{5})$ (see Remark \ref{rem:quadfield}).

To begin, we first rule out all of the other fields by showing that if the denominator of $\frac{\zeta_K(-1)}{2}$ divides $12$ then there cannot be such a congruence surface.
We then go on to rule out the remaining two exceptional quadratic fields.

\begin{proposition}\label{quadprop}
There is no congruence surface constructed from a maximal order of genus $212$ with trace field $K$ a real quadratic, when $\frac{\zeta_K(-1)}{2} = \frac{211}{n}$ for some $n \mid 12$, $n<12$.
\end{proposition}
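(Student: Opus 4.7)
The plan is to reduce the statement to a short combinatorial analysis and then apply the torsion-elimination criterion of Theorem \ref{tor}. Substituting $g=212$, $d=2$, and $\frac{|\zeta_K(-1)|}{2}=\frac{211}{n}$ into Equation \eqref{eq:3} yields
\[\prod_{\p\in\Ram_f(\A)}(\Nr(\p)-1)=n.\]
Since $\A$ ramifies at exactly one of the two infinite places of $K$ and $|\Ram(\A)|$ is even, $|\Ram_f(\A)|$ must be odd; in particular, $\Ram_f(\A)$ is nonempty. Since $K$ is real quadratic and distinct from $\Q(\sqrt{2})$ and $\Q(\sqrt{5})$, Theorem \ref{tor} shows that the only prime-order torsion that can appear in $P(\A^1)$ has order $2$ or $3$. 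As any torsion element of order $m$ has powers of every order dividing $m$, it therefore suffices to show that, for each admissible $\Ram_f(\A)$, either $2$-torsion or $3$-torsion in $P(\A^1)$ cannot be eliminated.

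For each $n\in\{1,2,3,4,6\}$ the allowed multisets of $\Nr(\p)$ values with $\prod(\Nr(\p)-1)=n$ are severely restricted. Each $\Nr(\p)$ must be a prime power $q$ with $q-1\mid n$, giving $q\in\{2,3,4,5,7\}$, and because $K$ is quadratic there are at most two primes of $\mathcal{O}_K$ over any given rational prime, with their inertial and ramification indices determined by whether that rational prime is split, ramified, or inert in $K$. Combining these constraints with the parity requirement on $|\Ram_f(\A)|$ leaves only a handful of configurations to analyze for each $n$.

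Each configuration is then ruled out via the contrapositives of Lemmas \ref{lem:2split}--\ref{lem:3ram}. For instance, when $n=1$ every prime of $\Ram_f(\A)$ has norm $2$ and is dyadic, so since $2\not\equiv 1\Mod{3}$ no prime can split in $K(\sqrt{-3})$ and $3$-torsion survives. When $n=3$, the only possibility is $\Ram_f(\A)=\{\p\}$ with $\Nr(\p)=4$, forcing $2$ to be inert in $K$ with $e(\p|2)=1$, so Lemma \ref{lem:2ram} precludes the elimination of $2$-torsion. The cases $n\in\{2,4,6\}$ follow the same pattern: whenever multiple primes of a common small norm appear in $\Ram_f(\A)$, the corresponding rational prime is forced to split in $K$, producing odd ramification index and a failure of the dyadic or triadic elimination criterion; whenever only a single odd-norm prime is present, its residue class modulo $4$ or modulo $3$ prevents the relevant splitting in $K(i)$ or $K(\sqrt{-3})$.

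The main obstacle is the combinatorial bookkeeping: one must systematically enumerate all multisets of prime norms with product-minus-one equal to each target $n$, impose the odd cardinality condition on $|\Ram_f(\A)|$, and verify in each configuration that at least one relevant torsion order necessarily survives. None of the individual cases is difficult, but the argument depends on not missing any of them, particularly the hybrid configurations like $\{3,3,2\}$ at $n=4$ where torsion at $2$ and torsion at $3$ each demand incompatible ramification behavior of the rational primes $2$ and $3$ in the quadratic field $K$.
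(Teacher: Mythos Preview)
Your approach is essentially identical to the paper's: reduce via Equation~\eqref{eq:3} to $\prod_{\p\in\Ram_f(\A)}(\Nr(\p)-1)=n$, use the odd parity of $|\Ram_f(\A)|$ to cut down the admissible multisets of norms, and then invoke Lemmas~\ref{lem:2split}--\ref{lem:3ram} to show that in every remaining configuration either $2$- or $3$-torsion survives. The paper writes out each of the cases $n\in\{1,2,3,4,6\}$ explicitly where you only sketch the pattern, but the structure and the individual eliminations (including the degree obstruction ruling out a norm-$4$ and a norm-$2$ prime coexisting over $2$ in a quadratic field) are the same; note also that your parenthetical about excluding $\Q(\sqrt{2})$ and $\Q(\sqrt{5})$ is unnecessary, since you only need that $2$- and $3$-torsion are \emph{possible} (which holds over every $K$), not that they are the only torsion orders.
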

\begin{proof}
Begin by noting that when $K$ is real quadratic and $\A$ gives rise to a hyperbolic manifold, the discussion in Sections \ref{sec:quatbackground} and \ref{sec:hypback} shows that $\A$ ramifies at one of its infinite places and the cardinality of the finite ramification, $|\Ram_f(\A)|$, is odd.
Therefore Equation \eqref{eq:3} becomes
\begin{equation} \label{eq:5}
\prod_{\mathfrak{p} \in \Ram_f(\A)}(\Nr(\mathfrak{p}) - 1) = n,
\end{equation}
Notice that if $n=1$, then every $\p\in\Ram_f(\A)$ has the property that $\Nr(\p)=2$.
In particular, Lemma \ref{lem:3split} combined with Theorem \ref{tor} shows that, in this case, $P(\A^1)$ has $3$-torsion.
Consequently it suffices to assume $n\mid 6$ and $n>1$.

First suppose that $\frac{\zeta_K(-1)}{2} = \frac{211}{6}$. 
Then Equation \eqref{eq:5} and the odd cardinality of $\Ram_f(\A)$ show that we must be in one of the following scenarios:
\begin{itemize}
\item $\Ram_f(\A)=\{\p\}$, where $\Nr(\p)=7$,
\item $\Ram_f(\A) = \{\p, \q_1, \q_2\}$, where $\Nr(\p) = 7$ and $\Nr(\q_j) = 2$,
\item $\Ram_f(\A) = \{\p_1, \p_2,\q\}$, where $\Nr(\p_1) = 4$, $\Nr(\p_2) = 3$, and $\Nr(\q) = 2$.
\end{itemize}
In the first case, since $7\equiv 3\pmod{4}$, Lemma \ref{lem:2split} shows that $\p$ does not split in $K(i)/K$.
Therefore by Theorem \ref{tor}, $P(\mathcal{A}^1)$ will still contain $2$-torsion and hence does not give rise to a congruence surface.
In the second case, again $\p$ will not help eliminate $2$-torsion and so we must have that $\q_j$ splits in $K(i)/K$ for some $j$.
However by Lemma \ref{lem:2ram} this shows that one of the $\q_j$ must ramify in $K$ which by Equation \eqref{eqn:degfromlocal} gives that $[K:\Q]\ge 3$, a contradiction.
Now assume we are in the third case, then again Equation \eqref{eqn:degfromlocal} shows that having the primes $\p_1$, $\q$ lying over $2$ is impossible for a field of degree $[K:\Q]\le 2$.

Next, suppose $\frac{\zeta_K(-1)}{2} = \frac{211}{4}$. 
Then by Equation \eqref{eq:5} we must see one of the following
\begin{itemize}
\item $\Ram_f(\A) = \{\p\}$, where $\Nr(\p) = 5$,
\item $\Ram_f(\A) = \{\p, \q_1, \q_{2}\}$, where $\Nr(\p) = 5$ and $\Nr(\q_j) = 2$,
\item $\Ram_f(\A) = \{\p_1, \p_2,\q\}$, where $\Nr(\p_1) = \Nr(\p_2) = 3$ and $\Nr(\q) = 2$.
\end{itemize}
To eliminate the first two cases, simply note that by Lemma \ref{lem:3split} no primes in $\Ram_f(\A)$ split in $K(\sqrt{-3})/K$, and thus a ramification set of this form gives $3$-torsion in $P(\A^1)$ by Theorem \ref{tor}.
In the latter case, $\Nr(\p_1)= \Nr(\p_2) = 3$ implies that there are two primes lying over $3$ and by Lemma \ref{lem:3ram} these primes do not split in $K(\sqrt{-3})/K$.
Again this ramification set will not eliminate $3$-torsion in $P(\A^1)$.

Now, suppose $\frac{\zeta_K(-1)}{2} = \frac{211}{3}$.
Then we must see one of
\begin{itemize}
\item $\Ram_f(\A) = \{\p\}$, where $\Nr(\p) = 4$,
\item $\Ram_f(\A) = \{\p, \q_1, \q_{2}\}$, where $\Nr(\p) = 4$ and $\Nr(\q_j) = 2$.
\end{itemize}
The latter is clearly absurd in a real quadratic so we assume $\Ram_f(\A)=\{\p\}$, where $\p$ is an inert prime dividing $2$.
By Lemma \ref{lem:2ram}, $\p$ does not split in $K(i)/K$ so this ramification set gives $2$-torsion in $P(\A^1)$ by Theorem \ref{tor} and hence $\A$ does not support a congruence surface from a maximal order.

Finally suppose  $\frac{\zeta_K(-1)}{2} = \frac{211}{2}$.
Then we must see one of
\begin{itemize}
\item $\Ram_f(\A) = \{\p\}$, where $\Nr(\p) = 3$,
\item $\Ram_f(\A) = \{\p, \q_1,  \q_{2}\}$, where $\Nr(\p) = 3$ and $\Nr(\q_j) = 2$.
\end{itemize}
By Lemma \ref{lem:2split}, $\p$ does not split in $K(i)/K$ and therefore we can immediately rule out the former case.
In the latter case this also shows that one of the $\q_j$ must split in $K(i)/K$. 
However again by Lemma \ref{lem:2ram}, such a $\q_j$ must ramify in $K$ contradicting the existence of both $\q_1$ and $\q_2$ via Equation \eqref{eqn:degfromlocal}.
Therefore no congruence surface can be built with this ramification set.
\end{proof}

\begin{lemma}\label{quadtwelvelemma}
There is no real quadratic number field $K$ with $\frac{\zeta_K(-1)}{2} = \frac{211}{12}$.
\end{lemma}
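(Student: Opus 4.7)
The plan is to combine Zagier's explicit formula for $\zeta_K(-1)$ over real quadratic fields with the Odlyzko-style bounds from Equation \eqref{eq:4} to reduce the question to a finite, manageable computation.

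First, I would specialize Equation \eqref{eq:4} to $d = 2$ with $\frac{|\zeta_K(-1)|}{2} = \frac{211}{12}$, yielding
\[ \frac{\Delta_K^{3/2}}{8\pi^4} \le \frac{211}{12} \le \frac{\Delta_K^{3/2}}{288}. \]
A direct computation then gives the two-sided bound $295 \le \Delta_K \le 572$, reducing the question to a finite list of real quadratic fundamental discriminants.

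Next, I would invoke Zagier's formula \cite{DZ}: for $K = \Q(\sqrt{D})$ with fundamental discriminant $D$,
\[ \zeta_K(-1) = \frac{1}{60} \sum_{\substack{|b| < \sqrt{D} \\ b \equiv D \pmod 2}} \sigma_1\!\left( \frac{D - b^2}{4} \right), \]
where $\sigma_1(n)$ denotes the sum of divisors of $n$. The condition $\zeta_K(-1) = 211/6$ is then equivalent to the integer equation $\sum \sigma_1((D-b^2)/4) = 2110$. One then enumerates the real quadratic fundamental discriminants in $[295, 572]$ (equivalently, squarefree positive $d$ with $d \equiv 1 \pmod 4$ and $295 \le d \le 572$, together with squarefree positive $d$ with $d \equiv 2,3 \pmod 4$ and $74 \le d \le 143$) and verifies directly that the equation fails in every case. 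Equivalently, one may simply appeal to the finite list of candidate trace fields computed via Dokchitser's algorithm in Corollary \ref{cor:zetalist} (cf. Table \ref{table:badzeta}) and observe that no real quadratic field with $\frac{\zeta_K(-1)}{2} = \frac{211}{12}$ appears.

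The main obstacle is essentially computational rather than theoretical. The denominator $6$ of $211/6$ divides the generic denominator $12$ of $\zeta_K(-1)$ for real quadratic $K$ (as guaranteed by Zagier's results and, in light of Remark \ref{rem:birchtate}, by the Birch--Tate conjecture in the abelian case), so one cannot expect a clean $p$-adic or parity obstruction to rule out $211/6$ abstractly. The strength of the argument instead lies in the sharpness of the bounds in Equation \eqref{eq:4}, which keeps the list of candidate discriminants small enough to verify by direct computation.
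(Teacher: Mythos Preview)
Your proposal is correct and follows essentially the same approach as the paper: both use the bounds of Equation \eqref{eq:4} at $d=2$ to confine $\Delta_K$ to a finite range (the paper uses $\Delta_K \le 574$, you sharpen this to $295 \le \Delta_K \le 572$), then appeal to Zagier's explicit formula and a direct tabulation (the paper's Table \ref{table:quadfield}) to verify that $\frac{\zeta_K(-1)}{2} = \frac{211}{12}$ never occurs. Your additional lower bound $\Delta_K \ge 295$ trims the computation somewhat but is not needed for correctness, and your alternative appeal to Corollary \ref{cor:zetalist} is also valid.
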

\begin{proof}
One can verify this completely computationally.
Indeed using the bounds from Equation \eqref{eq:4} we get that $\frac{\zeta_K(-1)}{2} > \frac{211}{12}$ whenever $\Delta_K > 574$.
Moreover, Zagier \cite[Equations (15)--(17)]{DZ} shows that for real quadratics $K=\Q(\sqrt{d})$
\begin{equation}\label{eqn:Zagierzeta}
\displaystyle\frac{\zeta_K(-1)}{2}=\frac{1}{120}e_1(\Delta_K)=\frac{1}{120}\sum_{\substack{x^2\equiv \Delta_K\pmod{4}\\|x|\le \sqrt{n}}}\sigma_1\left(\frac{\Delta_K-x^2}{4}\right),
\end{equation}
where $\sigma_1(-)$ denotes the sum of positive divisors function and $\Delta_K$ is the discriminant of $\Q(\sqrt{d})$, i.e.
\[ \Delta_K =
\begin{cases}
d, & d  \equiv 1 \pmod 4, \\
4d, &  d \equiv 2, 3 \pmod 4.
\end{cases}
\]
Computationally one can easily list all real quadratic number fields with $\Delta_K$ up to $574$, which we do in Table \ref{table:quadfield} in the Appendix.
By inspection, one then sees that no such $K$ exists.
\end{proof}

We may rule out the remaining exceptional fields using the analysis of Theorem \ref{1/n}.

\begin{prop}\label{prop:noexceptionalquads}
Let $K=\Q(\sqrt{d})$ for $d\in\{2,5\}$, then there is no genus $212$ congruence surface constructed from a maximal order.
\end{prop}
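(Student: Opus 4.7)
The plan is to compute $|\zeta_K(-1)|/2$ explicitly for both exceptional fields using Zagier's formula \eqref{eqn:Zagierzeta}, and then appeal to Theorem \ref{1/n} to get a contradiction in each case. Because both fields are real quadratic with very small discriminant, Zagier's formula reduces to summing divisor functions over just a handful of integers, and the outputs fit cleanly into the $1/n$ framework developed in the previous section.

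First I would carry out the zeta computation. For $K=\Q(\sqrt 2)$ we have $\Delta_K=8$, and the integers with $x^2\equiv 0\pmod 4$ and $|x|\le\sqrt 8$ are $x\in\{-2,0,2\}$. These contribute $\sigma_1(2)+2\sigma_1(1)=3+2=5$ to $e_1(8)$, so $|\zeta_K(-1)|/2=5/120=1/24$. For $K=\Q(\sqrt 5)$ we have $\Delta_K=5$, and the only $x$ with $x^2\equiv 1\pmod 4$ and $|x|\le\sqrt 5$ are $x=\pm 1$, giving $e_1(5)=2\sigma_1(1)=2$ and $|\zeta_K(-1)|/2=1/60$.

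Next I would feed these values into Theorem \ref{1/n} with $g-1=211$. For $\Q(\sqrt 2)$ the denominator is $n=24$; one checks that $24$ is not divisible by any element of $\{10,22,42,52,58,70,72\}$, so Theorem \ref{1/n} immediately rules out a genus $212$ congruence surface from a maximal order with this trace field. For $\Q(\sqrt 5)$ the denominator is $n=60$, and although $10\mid 60$ satisfies the first divisibility condition, the discriminant $\Delta_K=5$ is odd, so $2\nmid\Delta_K$. The moreover clause of Theorem \ref{1/n} then forces $n\in\{72,80\}$, contradicting $n=60$.

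There is no genuine obstacle in this argument: once the zeta values are in hand, everything reduces to the divisibility/discriminant dichotomy already packaged in Theorem \ref{1/n}. The only point requiring care is the mechanical enumeration in Zagier's formula, which is a small finite calculation. (As a sanity check, for $\Q(\sqrt 5)$ one could also note that $\Q(\xi_{10})/\Q(\sqrt 5)$ is a quadratic extension, so additional congruence conditions from eliminating $5$-torsion would come into play, but these are not needed since Theorem \ref{1/n} already closes the case.)
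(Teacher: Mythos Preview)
Your proof is correct and follows essentially the same route as the paper: compute $\zeta_K(-1)/2$ for the two exceptional quadratics (the paper simply quotes the values $1/24$ and $1/60$, which also appear in Table~\ref{table:quadfield}, whereas you derive them explicitly from Zagier's formula) and then invoke Theorem~\ref{1/n}, using the first divisibility clause for $\Q(\sqrt 2)$ and the $2\nmid\Delta_K$ clause for $\Q(\sqrt 5)$. The only addition on your end is the explicit sum-of-divisors computation, which is a welcome sanity check but not logically new.
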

\begin{proof}
Indeed when $K=\Q(\sqrt{2})$, $\frac{\zeta_K(-1)}{2}=\frac{1}{24}$ and the result is immediate from Theorem \ref{1/n}.
When $K=\Q(\sqrt{5})$, the discriminant is given by $\Delta_K=5$ and so the result again follows from Theorem \ref{1/n} in the case that $2\nmid\Delta_K$.
\end{proof}

\begin{rem}\label{rem:quadfield}
One can give a more specific examination of Equation \eqref{eqn:Zagierzeta} to find precisely which fields $K=\Q(\sqrt{d})$ have the potential to have a denominator of $12$ in $\zeta_K(-1)$.
For instance one can prove that, outside of the exceptional cases of $d\in\{2,5\}$, $10$ always divides
$$\sum_{\substack{x^2\equiv \Delta_K\pmod{4}\\|x|\le \sqrt{n}}}\sigma_1\left(\frac{\Delta_K-x^2}{4}\right),$$
and moreover $20$ divides this sum if and only if $d$ is not of the form $d=p$ or $d=2p$ where $p$ is a prime congruent to $3,5\pmod{8}$.
Unfortunately even with these constraints, it is difficult to find a general method for ruling out $\frac{211}{12}$ as a value of $\frac{\zeta_K(-1)}{2}$ without simply computing all possible values.
In fact, this is one reason that we chose genus $212$ as a candidate for the Theorem \ref{thm:main}.
\end{rem}

Combining the discussion in this section with Propositions \ref{quadprop}, \ref{prop:noexceptionalquads}, and Lemma \ref{quadtwelvelemma} we then have the following theorem.

\begin{thm}\label{thm:noquad}
There is no congruence surface constructed from a maximal of genus $212$ with real quadratic trace field.
\end{thm}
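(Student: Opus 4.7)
The plan is to combine the section's prior results using Zagier's rationality theorem as the organizing principle. Specializing Equation \eqref{eq:3} to $d=2$ and $g=212$, any such surface would satisfy
\[ \frac{|\zeta_K(-1)|}{2}\prod_{\p\in\Ram_f(\A)}(\Nr(\p)-1)=211. \]
Writing $\frac{|\zeta_K(-1)|}{2}=\frac{m}{n}$ in lowest terms and using the primality of $211$, this forces $m\in\{1,211\}$, so the entire argument reduces to analyzing these two subcases.

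First I would separate out the two exceptional fields $\Q(\sqrt{2})$ and $\Q(\sqrt{5})$, which are disposed of directly by Proposition \ref{prop:noexceptionalquads}. For every other real quadratic $K$, Zagier's theorem guarantees $n\mid 12$. Under the subcase $m=211$, Proposition \ref{quadprop} handles all $n<12$ and Lemma \ref{quadtwelvelemma} handles $n=12$. Under the subcase $m=1$ with $n\ge 2$, we have $\frac{|\zeta_K(-1)|}{2}=\frac{1}{n}$, which falls into the hypothesis of Theorem \ref{1/n}; that theorem demands that $n$ be divisible by some $k\in\{10,22,42,52,58,70,72\}$, incompatible with $n\le 12$. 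Finally, if $m=n=1$ then the product must equal $211$, and since $|\Ram_f(\A)|$ is odd the only possibility is $\Ram_f(\A)=\{\p\}$ with $\Nr(\p)=212$, but $212=4\cdot 53$ is not a prime power.

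The proof is thus a straightforward assembly of the preceding results; no new ideas are required. The conceptual work of the section is concentrated in Proposition \ref{quadprop} and Lemma \ref{quadtwelvelemma} -- the former relying on a careful torsion analysis using Lemmas \ref{lem:2split}--\ref{lem:3ram}, and the latter on an exhaustive computation via Zagier's formula for $\zeta_K(-1)$. By this stage the main obstacle has already been overcome, and the theorem reduces to a clean case split driven by the primality of $211$ and Zagier's denominator bound.
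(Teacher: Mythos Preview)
Your argument is correct and follows the paper's approach: eliminate the exceptional fields via Proposition~\ref{prop:noexceptionalquads}, invoke Zagier's denominator bound, then apply Proposition~\ref{quadprop} and Lemma~\ref{quadtwelvelemma}. In fact you are more careful than the paper's own proof, which cites only Proposition~\ref{quadprop} and Lemma~\ref{quadtwelvelemma} and does not explicitly isolate the numerator-$1$ subcase that you dispatch via Theorem~\ref{1/n}.

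Two cosmetic slips worth fixing. First, the incompatibility you need is with $n\mid 12$, not merely $n\le 12$ (for instance $n=10\le 12$ is permitted by Theorem~\ref{1/n}, but $10\nmid 12$, so the actual constraint does the work). Second, in the $m=n=1$ case, odd cardinality of $\Ram_f(\A)$ does not force a single prime: one could have $\Ram_f(\A)=\{\p,\q_1,\q_2\}$ with $\Nr(\q_i)=2$. This is harmless, however, since any factorization of the prime $211$ into factors $\Nr(\p_i)-1$ still requires some prime of norm $212$, which is not a prime power; the parity remark is simply unnecessary.
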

\begin{proof}
By Equation \eqref{eq:3} we have that
$$211=\frac{\zeta_K(-1)}{2}\prod_{\p\in\Ram_f(\A)}(\Nr(\p)-1).$$
By Lemma \ref{prop:noexceptionalquads} we may assume that $K=\Q(\sqrt{d})$ for $d\neq 2,5$.
For any such field, we claim that the denominator of $\frac{\zeta_K(-1)}{2}$ must divide $12$.
Indeed there are three ways to see this: the first is by carrying out the analysis in \cite[\S 3]{DZ} and analyzing the subsequent divisibility conditions, the second is by applying the Birch--Tate conjecture (a theorem in this case since $K/\Q$ is abelian) where when $d\neq 2,5$ we see that $w_2(K)=24$ and hence the denominator divides $12$, and the third is simply by examining the values in Table \ref{table:quadfield}.
From this, one can apply Proposition \ref{quadprop} and Lemma \ref{quadtwelvelemma} to conclude the theorem.
\end{proof}

%~~~~~~~~~~~~~~~~~~~~~~~~~~~~~~~~~~~~~~~~~~~~~~~~~~~
%~~~~~~~~~~~~~~~~~~~~~~~~~~~~~~~~~~~~~~~~~~~~~~~~~~~
%~~~~~~~~~~~~~~~~~~~~~~~~~~~~~~~~~~~~~~~~~~~~~~~~~~~

\section{Congruence surfaces from trace fields of higher degree}\label{section:higherdegcomp}

\subsection{Reduction to a small list of fields}\label{section:compred}
In the case where $[K:\Q] > 2$, an analysis like the one in Section \ref{sec:quadfield} is extremely difficult as the splitting behavior of primes becomes much more complicated in arbitrary number fields.
We must therefore instead rely on computational results regarding $\zeta_K(-1)$ and constraints on the splitting behavior of $2$ and $3$ to rule out many of the remaining fields.
Notice that the techniques of Section \ref{sec:quadfield}, specifically Proposition \ref{quadprop}, generalize immediately to give the following proposition for number fields of arbitrary degree.

\begin{proposition} \label{Prop}
If $\A$ is a quaternion algebra over a totally real number field $K$ giving rise to a congruence surface of genus $212$ and $\frac{|\zeta_K(-1)|}{2^{[K:\Q]-1}}=\frac{211}{n}$ for $n\in\{2,3,4,6\}$, then one of the following holds:
\begin{itemize}
\item $\frac{|\zeta_K(-1)|}{2^{[K:\Q]-1}} = \frac{211}{6}$ and either
$$\Ram_f(\A) = \{\p, \q_1, \dots, \q_{s}\},$$
where $\Nr(\p) = 7$, $\Nr(\q_i) = 2$, or
$$\Ram_f(\A) = \{\p_1, \p_2,\q_1, \dots, \q_{s}\},$$
where $\Nr(\p_1) = 4$, $\Nr(\p_2) = 3$, $\Nr(\q_i) = 2$.
\item $\frac{|\zeta_K(-1)|}{2^{[K:\Q] - 1}} = \frac{211}{4}$ and
$$\Ram_f(\A) = \{\p_1, \p_2,\q_1, \dots, \q_{s}\},$$
where $\Nr(\p_1) = \Nr(\p_2) = 3$, $\Nr(\q_i) = 2$.
\item $\frac{|\zeta_K(-1)|}{2^ {[K:\Q] - 1}} = \frac{211}{3}$ and
$$\Ram_f(\A) = \{\p, \q_1, \dots, \q_{s}\},$$
where $\Nr(\p) = 4$, $\Nr(\q_i) = 2$.
\item $\frac{|\zeta_K(-1)|}{2^{K:\Q] - 1}} = \frac{211}{2}$ then
$$\Ram_f(\A) = \{\p, \q_1, \dots, \q_{s}\},$$
where $\Nr(\p) = 3$, $\Nr(\q_i) = 2$.
\end{itemize}
By convention, we allow for the possibility that $s=0$ by which we mean that there are no primes of the form $\q_i$ in the $\Ram_f(\A)$ above.
\end{proposition}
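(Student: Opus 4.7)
The plan is to begin by specializing Equation \eqref{eq:3} to the case $g = 212$. Substituting the hypothesized value $\frac{|\zeta_K(-1)|}{2^{d-1}} = \frac{211}{n}$ for $d = [K:\Q]$ yields the purely arithmetic equation
$$n = \prod_{\p \in \Ram_f(\A)} (\Nr(\p) - 1).$$
Since each $\Nr(\p)$ is a prime power, the proof reduces to enumerating, for each $n \in \{2, 3, 4, 6\}$, all multiplicative decompositions of $n$ into factors of the form $p^k - 1$ with $p^k$ a prime power, and then ruling out any resulting ramification set that would force torsion in $P(\A^1)$ via Theorem \ref{tor}. Throughout, note that factors equal to $1$ correspond to primes $\q$ with $\Nr(\q) = 2$, which can always be adjoined to the set in arbitrary number.

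The enumeration for $n = 2$ and $n = 3$ is essentially forced: the divisors of $n$ restrict the admissible values of $\Nr(\p)$ to $\{2, 3\}$ and $\{2, 4\}$ respectively, and only the nontrivial factorizations $2 = 2$ and $3 = 3$ remain, yielding precisely the configurations in the proposition. For $n = 6$, the admissible values are $\{2, 3, 4, 7\}$ (corresponding to divisors $1, 2, 3, 6$) and the two nontrivial factorizations $6 = 6$ and $6 = 2 \cdot 3$ give exactly the two cases listed.

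The only case requiring a non-numerical argument is $n = 4$, where the admissible values are $\{2, 3, 5\}$ and two decompositions exist: $4 = 2 \cdot 2$, giving $\{\p_1, \p_2, \q_1, \dots, \q_s\}$ with $\Nr(\p_j) = 3$ and $\Nr(\q_i) = 2$ as in the proposition, and $4 = 4$, giving $\{\p, \q_1, \dots, \q_s\}$ with $\Nr(\p) = 5$ and $\Nr(\q_i) = 2$. To rule out the second configuration, I will observe that none of these primes lies over $3$, and since $5 \not\equiv 1 \pmod 3$ and $2 \not\equiv 1 \pmod 3$, Lemma \ref{lem:3split} implies that no prime in $\Ram_f(\A)$ can split in $K(\sqrt{-3})/K$. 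Theorem \ref{tor} then forces $P(\A^1)$ to contain a nontrivial element of order $3$, contradicting the hypothesis that $\A$ gives rise to a congruence \emph{surface} rather than an orbifold. This exhausts the case analysis; the main obstacle, modest as it is, is recognizing that the $n=4$ case is the unique one where a numerically consistent factorization must be eliminated by a torsion argument, while all other cases are determined by arithmetic alone.
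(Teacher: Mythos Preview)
Your proof is correct and follows essentially the same approach as the paper, which states that Proposition~\ref{Prop} generalizes immediately from the case analysis in Proposition~\ref{quadprop}. In particular, the paper explicitly singles out the elimination of the $\Nr(\p)=5$ configuration in the $n=4$ case via Lemma~\ref{lem:3split}, exactly as you do.
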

\noindent Notice in particular that we have excluded the case of $\frac{|\zeta_K(-1)|}{2^{[K:\Q] - 1}} = \frac{211}{4}$ and
$$\Ram_f(\A) = \{\p, \q_1, \dots, \q_{s}\},$$
where $\Nr(\p)=5$ and $\Nr(\q_i) = 2$, as such a ramification set will never eliminate $3$-torsion by Lemma \ref{lem:3split}.

A more thorough analysis of the above ramification sets shows that some of these primes must ramify in $K$ in order to eliminate torsion in $P(\A^1)$.
We can therefore use Theorem \ref{ram} to give conditions on the discriminant $\Delta_K$, for the ramification sets indicated above to be realized. 
First note in all of these cases, none of the primes $\p \nmid 2$ split in $K(i)/K$ by Lemma \ref{lem:2split} and thus Lemma \ref{lem:2ram} combined with Theorem \ref{ram} show that $4 \mid \Delta_K$ is a necessary condition for any of the ramification sets above to produce an algebra $\A$ such that $P(\A^1)$ has no torsion.
Moreover, in the case where $\frac{|\zeta_K(-1)|}{2} \in\{ \frac{211}{4}, \frac{211}{2}\}$, Lemmas \ref{lem:3split} and \ref{lem:3ram} give that both $2$ and $3$ must ramify in $K$ and hence $12 \mid \Delta_K$ is a necessary condition on the trace field $K$.
Table \ref{table:badzetawithdiv} in the appendix provides a list of all fields from Table \ref{table:badzeta} where these further divisibility conditions on $\Delta_K$ hold.

In the case where $\frac{|\zeta_K(-1)|}{2} \in\{ \frac{211}{3}, \frac{211}{6}\}$, the ramification sets listed in Proposition \ref{Prop} require some $\p \in \Ram_f(\A)$ lying over 2, which necessarily wildly ramifies by Lemma \ref{lem:2ram}. 
Moreover, in each case it could hold that $\Nr(\p) = 2$, and since wild ramification only gives a lower bound on the power of $\p$ dividing $\mathcal{D}_K$ in Theorem \ref{ram}, no improvement can be made on the condition that $4 \mid \Delta_K$.
As a result, in the final section of this paper we analyze the fields in Table \ref{table:badzetawithdiv} computationally to show that the ramification sets in Proposition \ref{Prop} either define algebras with torsion, or simply do not define algebras,  in each individual field.

%~~~~~~~~~~~~~~~~~~~~~~~~~~~~~~~~~~~~~~~~~~~~~~~~~~~
%~~~~~~~~~~~~~~~~~~~~~~~~~~~~~~~~~~~~~~~~~~~~~~~~~~~
%~~~~~~~~~~~~~~~~~~~~~~~~~~~~~~~~~~~~~~~~~~~~~~~~~~~

\subsection{Computations in the remaining fields}\label{section:comptables}
To conclude, we present computations of the splitting behavior of $2$ and $3$ in the fields provided in Table 3. 
None of the methods from the preceding two sections can circumvent the need to check this computationally, as the possibility of wild ramification in these fields allows for no tighter restrictions on $K$ and specifically $\Delta_K$. 
Using SAGE \cite{Sage}, we begin by analyzing the fields where $\frac{|\zeta_K(-1)|}{2^{[K:\Q] - 1}} = \frac{211}{3}$

\begin{proposition}\label{211/3}
For the fields in Table 3, where $\frac{|\zeta_K(-1)|}{2^{[K:\Q] - 1}} = \frac{211}{3}$, 2 factors as follows.
\begin{table}[H]
\centering
\begin{tabular}{|| c | c | c | c ||} \hline
$\Delta_K$ & Minimal polynomial & Factorization & Norms \\
\hline
$​13396$ & $x^3-x^2-25x+29$ & $\p^3$ & $\Nr(\p) = 2$ \\
\hline
$1471216$ & $x^5-2x^4-7x^3+6x^2+8x-4$ & $\p_1\p_2^3$ & $\Nr(\p_1) = 4, \Nr(\p_2) = 2$ \\
\hline
$1630076$ & $x^5-2x^4-9x^3+17x^2+4x-12$ & $\p_1\p_2^2$ & $\Nr(\p_1) = 8, \Nr(\p_2) = 2$ \\
\hline
$1723364$ & $x^5-2x^4-7x^3+13x^2+8x-11$ & $\p_1\p_2^3$ & $\Nr(\p_1) = 4, \Nr(\p_2) = 2$ \\
\hline
$17386832$ & $x^6-3x^5-4x^4+10x^3+6x^2-4x-2$ & $\p_1\p_2^5$ & $\Nr(\p_1) = \Nr(\p_2) = 2$ \\
\hline
$22340432$ & $x^6-x^5-8x^4+5x^3+16x^2-5x-1$  & $\p^3$ & $\Nr(\p) = 4$ \\
\hline
$23556176$ & $x^6-x^5-8x^4+5x^3+16x^2-7x-7$ & $\p^3$ &  $\Nr(\p) = 4$ \\
\hline
\end{tabular}
\end{table}
\end{proposition}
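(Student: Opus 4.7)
The plan is to verify each row of the table by direct computation, which is largely mechanical but worth spelling out. For each field $K = \Q[x]/(f(x))$ in the list, the approach proceeds in two stages. First, I would use SAGE to confirm that $K$ is totally real (for instance by checking that $f$ has $[K:\Q]$ real roots via Sturm's theorem) and that its discriminant matches the stated value of $\Delta_K$. This computation implicitly determines the full ring of integers $\mathcal{O}_K$, since SAGE invokes the Round-2 algorithm whenever the polynomial discriminant and the field discriminant differ by a non-trivial square factor.

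Second, for each such $K$, I would compute the factorization of the ideal $(2) \subset \mathcal{O}_K$. When $2 \nmid [\mathcal{O}_K : \Z[\alpha]]$, where $\alpha$ is a root of $f$, the Kummer--Dedekind theorem yields the factorization directly from reducing $f$ modulo $2$: each irreducible factor of $\overline{f} \in \F_2[x]$ of degree $d$ appearing with exponent $e$ corresponds to a prime $\p$ above $(2)$ with residue degree $f(\p \mid 2) = d$ and ramification index $e(\p \mid 2) = e$, so that $\Nr(\p) = 2^d$. For example, for $f(x) = x^3 - x^2 - 25x + 29$ one computes $\overline{f}(x) = (x+1)^3$ in $\F_2[x]$, which immediately reproduces the first row. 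Analogous mod-$2$ factorizations handle several of the remaining rows whenever the index is odd.

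The main obstacle is that for several of the fields in the table, particularly those of higher degree and large discriminant such as $\Delta_K = 17386832$ or $\Delta_K = 22340432$, one has $2 \mid [\mathcal{O}_K : \Z[\alpha]]$, so Kummer--Dedekind does not apply and a more sophisticated algorithm is required to produce the factorization of $(2)\mathcal{O}_K$. In these cases I would rely on SAGE's \texttt{K.ideal(2).factor()} routine, which uses a combination of the Round-2 algorithm and local methods at the prime $2$ to factor the ideal in the maximal order. I would then simply record the output, match it against the claimed prime factorization, and read off the residue degrees to verify the reported norms. Once this verification is completed for all seven fields, the proposition is established; although the argument is computational rather than conceptual, the resulting factorizations have clear theoretical content, as each of them will be used in conjunction with Proposition \ref{Prop} to rule out the possibility that any of these fields admits a ramification set yielding a genus $212$ congruence surface.
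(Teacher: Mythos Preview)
Your proposal is correct and matches the paper's approach: the paper offers no proof for this proposition beyond the sentence ``Using SAGE, we begin by analyzing the fields\ldots'', so the content is purely a SAGE computation of the factorization of $(2)$ in each of the seven fields. Your write-up is actually more detailed than the paper, spelling out when Kummer--Dedekind suffices and when one must defer to SAGE's maximal-order factorization, but the underlying method is the same.
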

From Proposition \ref{Prop} we see that only the four fields with a prime $\p \mid 2$ and $\Nr(\p) = 4$ could possibly give rise to a genus 212 congruence surface from a maximal order.
However, none of these fields have a prime $\p\mid 2$ with even ramification index, so by Lemma \ref{lem:2ram} none of the primes over $2$ split in $K(i)/K$.
Therefore if $\A$ is a quaternion algebra over these fields with ramification set as in Proposition \ref{Prop}, then $P(\A^1)$ necessarily has 2-torsion.
Consequently, there is no construction of a genus 212 congruence surface constructed from a maximal order in a quaternion algebra over any of the fields in Proposition \ref{211/3}.
\begin{proposition}\label{211/6}
For the fields in Table 3, where $\frac{|\zeta_K(-1)|}{2^{[K:\Q] - 1}} = \frac{211}{6}$, the factorization of $2$ is the following
\begin{table}[H]
\centering
\begin{tabular}{|| c | c | c | c ||} \hline
$\Delta_K$ & Minimal polynomial & Factorization  & Norms \\
\hline
$1060708$ & $x^5-2x^4-7x^3+13x^2+10x-17$ & $\p_1\p_2^3$ & $\Nr(\p_1) = 4, \Nr(\p_2) = 2$ \\
\hline
$12694016$ & $x^6-8x^4-2x^3+16x^2+8x-1$ & $\p_1^2\p_2^2$ & $\Nr(\p_1) =  4, \Nr(\p_2) = 2$\\
\hline
$15004240$ & $x^6-2x^5-11x^4+16x^3+35x^2-26x-17$  & $\p^3$ & $\Nr(\p) = 4$ \\
\hline
$15378496$ & $x^6-2x^5-5x^4+8x^3+6x^2-6x-1$ & $\p^2$ & $\Nr(\p) = 8$ \\
\hline
%$17386832$ & $x^6-3x^5-4x^4+10x^3+6x^2-4x-2$ & $\p_1\p_2^5$ & $\Nr(\p_1) = \Nr(\p_2) = 2$ \\
%\hline
$154050496$ & $x^7-x^6-8x^5+6x^4+13x^3-9x^2-x+1$& $\p_1\p_2^2$ & $\Nr(\p_1) = 2, \Nr(\p_2) = 8$ \\
\hline
\end{tabular}
\end{table}
\noindent and the factorization of $3$ is the following
\begin{table}[H]
\begin{tabular}{|| c | c | c | c ||} \hline
$\Delta_K$ & Minimal polynomial & Factorization & Norms \\
\hline
$1060708$ & $x^5-2x^4-7x^3+13x^2+10x-17$ & $3$ & $\Nr(3) = 729$\\
\hline
$12694016$ & $x^6-8x^4-2x^3+16x^2+8x-1$ & $3$ & $\Nr(3) = 729$\\
\hline
$15004240$ & $x^6-2x^5-11x^4+16x^3+35x^2-26x-17$ & $\p_1\p_2$ & $\Nr(\p_1) = \Nr(\p_2) = 27$  \\
\hline
$15378496$ & $x^6-2x^5-5x^4+8x^3+6x^2-6x-1$ & $\p_1\p_2$ & $\Nr(\p_1) = \Nr(\p_2) = 27$ \\
\hline
%$17386832$ & $x^6-3x^5-4x^4+10x^3+6x^2-4x-2$ & $3$ & $\Nr(3) = 729$ \\
%\hline
$154050496$ & $x^7-x^6-8x^5+6x^4+13x^3-9x^2-x+1$& $\p_1\p_2$ & $\Nr(\p_1) = \Nr(\p_2) = 27$ \\
\hline
\end{tabular}
\end{table}
\end{proposition}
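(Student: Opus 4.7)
The statement is a direct factorization claim about the primes $(2)$ and $(3)$ in five explicit number fields, so the proof is essentially a verification rather than an argument. The plan is to carry out the computation in SAGE and then cross-check each row by hand wherever possible.

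For each row, I would define $K = \Q[x]/(p(x))$ from the given minimal polynomial, ask SAGE to build the maximal order $\cO_K$, and call the built-in factorization routine on the ideals $2\cO_K$ and $3\cO_K$. Under the hood, SAGE first produces an integral basis for $\cO_K$ via a Round-$4$ style algorithm, which is genuinely necessary here because the discriminant of $p(x)$ need not equal $\Delta_K$; it then factors the two principal ideals via a Pauli--Montes or Pohst--Zassenhaus procedure. The output of each call lists every prime $\p$ over $p$ together with its ramification index $e(\p|p)$ and residue degree $f(\p|p)$, from which $\Nr(\p) = p^{f(\p|p)}$ can be read off directly and matched against the tables.

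For each row I would perform two independent sanity checks. First, the local-global identity $\sum_{\p|p} e(\p|p) f(\p|p) = [K:\Q]$ from Equation \eqref{eqn:degfromlocal} must hold; this is immediate to verify by eye against each tabulated factorization. Second, whenever $p$ does not divide the index $[\cO_K : \Z[\alpha]]$, Dedekind's theorem applies: factor $p(x) \bmod p$ in $\F_p[x]$ and match the resulting shape to the claimed ideal factorization. Only in a few dyadic cases in the sextic and septic fields is the index condition likely to fail, and there a re-run of the same ideal factorization in Magma would provide a second layer of confidence.

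The main obstacle is simply trust in the symbolic algebra implementation, since the statement itself is a finite computation with no room for a slick algebraic shortcut. The downstream role of the proposition in the paper is what justifies singling out only the primes $2$ and $3$: combined with Proposition \ref{Prop}, the absence of any prime of norm $3$ in any of these five fields rules out the second candidate family of ramification sets immediately, while the recorded ramification indices over $2$, combined with Lemma \ref{lem:2split} applied to the norm-$7$ option (since $7 \equiv 3 \pmod 4$) and Lemmas \ref{lem:2ram}--\ref{lem:3ram} for the dyadic primes in $\Ram_f(\A)$, force $P(\A^1)$ to retain torsion in every remaining configuration.
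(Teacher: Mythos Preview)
Your approach is correct and matches the paper's: the proposition is purely computational, and the paper obtains it by the same SAGE ideal-factorization you describe (announced at the start of Section~\ref{section:comptables}), without the additional Dedekind/Magma cross-checks you sensibly propose. One small correction to your final paragraph on the downstream application: the field with $\Delta_K = 12694016$ \emph{does} have a norm-$2$ prime of even ramification index ($\p_2^2$ in the table), so torsion considerations alone do not dispose of it; the paper instead observes that $[K:\Q]=6$ forces $|\Ram_f(\A)|$ to be odd, which is incompatible with the only remaining option $\Ram_f(\A)=\{\p,\q\}$.
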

As there are no norm 3 primes in these fields, we see by Proposition \ref{Prop} that in order to have a genus 212 congruence surface from a maximal order we must have a quaternion algebra $\A$ with
$$\Ram_f(\A) = \{\p, \q_1, \dots, \q_{s}\},$$
where $\Nr(\p) = 7$, $\Nr(\q_i) = 2$.
In all but one field, every norm 2 prime has odd ramification index and thus, by Lemma \ref{lem:2ram}, these primes do not split in $K(i)/K$.
Therefore in these fields, a ramification set of the aforementioned form defines an algebra $\A$ such that $P(\A^1)$ has $2$-torsion.
The lone field where there exists a norm 2 prime with even ramification index is the field defined by $p(x) = x^6-8x^4-2x^3+16x^2+8x-1$.
Hence $\Ram_f(\A) = \{\p, \q\}$ where $\Nr(\p) = 7, \Nr(\q) = 2$ since there is only one norm $2$ prime. 
However this is a field of degree six and, by the discussion in Section \ref{sec:hypback}, in order to obtain a hyperbolic surface from $\A$, we must have that $|\Ram_f(\A)|$ is odd.
Therefore there is no such $\A$ with this ramification set and hence there is no construction of a genus $212$ congruence surface constructed from a maximal order with trace field $K$ from the list in Proposition \ref{211/6}.

To conclude we consider the field defined by the polynomial $p(x) = x^4-20x^2+95$, which has $\frac{|\zeta_K(-1)|}{2^{[K:\Q] - 1}} = \frac{211}{30}$.
\begin{proposition}\label{Fin}
Let $K$ be the number field with minimal polynomial $p(x) = x^4 - 20x^2 + 95$.
If a congruence surface from a maximal order of genus $g = 212$ has trace field $K$, then there is a quaternion algebra $\A$ over $K$ with one of the following ramification sets
\begin{enumerate}
\item $\Ram_f(\A) = \{\p, \q_1, \dots, \q_s\}$, where $\Nr(\p) = 31$ and $\Nr(\q_i) = 2$,
\item $\Ram_f(\A) = \{\p_1, \p_2, \q_1, \dots, \q_s\}$, where $\Nr(\p_1) = 16$, $\Nr(\p_2) = 3$, and $\Nr(\q_i) = 2$,
\item $\Ram_f(\A) = \{\p_1, \p_2, \q_1, \dots, \q_s\}$, where $\Nr(\p_1) = 11$, $\Nr(\p_2) = 4$, and $\Nr(\q_i) = 2$.
\end{enumerate}
Again by convention, we allow for the possibility that $s=0$ by which we mean that there are no primes of the form $\q_i$ in $\Ram_f(\A)$.
\end{proposition}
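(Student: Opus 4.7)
The plan is to derive these three possibilities purely from Equation \eqref{eq:3} and the parity constraint on $|\Ram_f(\A)|$, following the template of Propositions \ref{quadprop} and \ref{Prop}. Since $[K:\Q]=4$ and $\tfrac{|\zeta_K(-1)|}{2^{[K:\Q]-1}}=\tfrac{211}{30}$, Equation \eqref{eq:3} rearranges to
$$\prod_{\p\in\Ram_f(\A)}(\Nr(\p)-1)=30.$$
Each factor $\Nr(\p)-1$ is a positive divisor of $30$, and since each $\Nr(\p)$ is a prime power, I would keep only those divisors $d$ of $30$ for which $d+1$ is a prime power. From $\{1,2,3,5,6,10,15,30\}$ this rules out only $d=5$ (because $6=2\cdot 3$), leaving the admissible factor set $\{1,2,3,6,10,15,30\}$, corresponding to $\Nr(\p)\in\{2,3,4,7,11,16,31\}$.

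Next I would enumerate multiplicative partitions of $30$ built from these factors, allowing the factor $1$ (i.e.\ norm-$2$ primes) to repeat. The key observation is that the ``middle'' candidates do not survive: the factor $6$ cannot appear since the complementary factor $30/6=5$ is forbidden, and the factor $4$ is not even a divisor of $30$, so primes of norms $5$ or $7$ never arise in $\Ram_f(\A)$. Hence after separating out the $1$'s every partition has one of three shapes, $\{30\}$, $\{15,2\}$, or $\{10,3\}$, corresponding to the distinguished-prime patterns $\{31\}$, $\{16,3\}$, $\{11,4\}$ of cases (1), (2), (3) respectively.

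Finally I would impose the parity condition. As in Section \ref{sec:hypback}, $\A$ ramifies at exactly three of the four infinite places of $K$; since $|\Ram(\A)|$ must be even, this forces $|\Ram_f(\A)|$ to be odd. In case (1) one may therefore attach an even number $s\geq 0$ of norm-$2$ primes, while in cases (2) and (3) one must attach an odd number $s\geq 1$. Combined with the paper's convention of writing ``$s=0$'' when no $\q_i$ appears, this reproduces the statement exactly. The whole argument is a short finite enumeration; the only thing to be careful about is that no prime-power factor of $30$ has been overlooked, and that the exclusion of the factors $4$ and $6$ is correctly justified. No torsion-elimination analysis is required here — the substantive work of ruling out each of the three surviving cases in turn, using the specific arithmetic of $K=\Q[x]/(x^4-20x^2+95)$ together with Theorem \ref{tor} and Lemmas \ref{lem:2split}--\ref{lem:3ram}, is deferred to the subsequent discussion.
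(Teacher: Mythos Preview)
Your derivation of the three ramification-set shapes from $\prod_{\p\in\Ram_f(\A)}(\Nr(\p)-1)=30$ is correct and is exactly the argument the proposition statement calls for; the enumeration of admissible divisors and the exclusion of the factors $5$ (since $6$ is not a prime power) and $6$ (since the cofactor $5$ cannot be decomposed into admissible pieces) are handled cleanly, and the three surviving partitions $\{30\}$, $\{15,2\}$, $\{10,3\}$ match cases (1)--(3).

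You should note, though, that the paper's proof block under Proposition~\ref{Fin} does \emph{not} prove this statement. The paper leaves the derivation of (1)--(3) implicit (by analogy with Proposition~\ref{Prop}) and instead uses the proof environment to \emph{eliminate} each of the three cases: it records that in $K$ the rational prime $2$ factors as $\mathfrak{r}^2$ with $\Nr(\mathfrak{r})=4$, so there are no norm-$2$ primes in $\mathcal{O}_K$ and $s=0$ is forced in every case; then case~(1) reduces to $\Ram_f(\A)=\{\p\}$ with $\Nr(\p)=31\equiv 3\pmod 4$, which leaves $2$-torsion in $P(\A^1)$, while cases~(2) and~(3) give $|\Ram_f(\A)|=2$, contradicting the parity constraint you yourself flagged. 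In other words, your proof supplies precisely what the paper omits, and what you defer to ``the subsequent discussion'' is precisely what the paper's proof block contains. Together they make up the complete argument that no genus-$212$ surface arises over this $K$.

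One minor remark: your parity observation that cases~(2) and~(3) require $s\ge 1$ already goes beyond the proposition as stated, and, combined with the absence of norm-$2$ primes in $K$, is in fact the very mechanism by which the paper disposes of those two cases.
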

\begin{proof}
In $K$, $2$ factors as $\Fr{r}^2$ where $\Nr(\Fr{r}) = 4$. 
Thus in order for the first ramification set in Proposition \ref{Fin} to be achieved, we must have $\Ram_f(\A) = \{\p\}$ where $\Nr(\p) = 31$.
Since $31 \equiv 3 \pmod 4$, by Lemma \ref{lem:2ram} this ramification set defines an algebra $\A$ such that $P(\A^1)$ has 2-torsion.
For the other two ramification sets we must have $\Ram_f(\A) = \{\p_1, \p_2\}$ since there are no norm 2 primes in $\mathcal{O}_K$.
This is impossible, as $[K:\Q] = 4$, so we must have that $|\Ram_f(\A)|$ is odd.
Hence there is no such algebra with the second or third ramification set in Proposition \ref{Fin}.
\end{proof}

%~~~~~~~~~~~~~~~~~~~~~~~~~~~~~~~~~~~~~~~~~~~~~~~~~~~
%~~~~~~~~~~~~~~~~~~~~~~~~~~~~~~~~~~~~~~~~~~~~~~~~~~~
%~~~~~~~~~~~~~~~~~~~~~~~~~~~~~~~~~~~~~~~~~~~~~~~~~~~

\section{The proof of Theorem \ref{thm:main}}

To conclude we recap our results from the previous sections to prove Theorem \ref{thm:main}, which we begin by restating.

\begin{mthm}
Contingent on the validity of Dokchitser's algorithm to numerically compute L-functions, there is no closed congruence surface constructed from a maximal order of genus 212.
\end{mthm}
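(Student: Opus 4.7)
The plan is to assemble the reductions carried out in the previous sections into a single exhaustive case analysis. First I would recall Equation \eqref{eq:3}, which shows that a genus $212$ congruence surface constructed from a maximal order in $\A$ over $K$ must satisfy
\[211 = \frac{|\zeta_K(-1)|}{2^{[K:\Q]-1}}\prod_{\p \in \Ram_f(\A)}(\Nr(\p)-1).\]
Proposition \ref{prop:degupperbnd} bounds $[K:\Q] \le 10$, and Corollary \ref{cor:zetalist} then furnishes a finite, explicitly computable list of candidate trace fields together with their special zeta values (this is the step that invokes Dokchitser's algorithm). So the problem reduces to walking through this finite list and ruling out each candidate.

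Next I would split into cases based on the degree of $K$ and the form of $\frac{|\zeta_K(-1)|}{2^{[K:\Q]-1}}$. The trace field $K = \Q$ is eliminated immediately by Proposition \ref{prop:noQ}, and all real quadratic trace fields by Theorem \ref{thm:noquad}. For $[K:\Q] \ge 3$, I would partition the remaining candidates according to whether $\frac{|\zeta_K(-1)|}{2^{[K:\Q]-1}}$ equals $\frac{1}{n}$ for some natural number $n$, or $\frac{211}{n}$ for some $n$ (necessarily $n \in \{2,3,4,6\}$ or a product of elements of the set $\{10,22,42,52,58,70,72\}$ constrained by the divisibility conditions of Section \ref{section:higherdegcomp}). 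The $\frac{1}{n}$ cases are all dispatched by Theorem \ref{1/n}: the hypotheses force $n$ to be divisible by one of a small set of integers, and the explicit numerics in our finite list can be checked against this.

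The $\frac{211}{n}$ cases with $n \in \{2,3,4,6\}$ are the computationally intensive ones. Here I would invoke Proposition \ref{Prop} to list the possible ramification sets in $\Ram_f(\A)$, then use the necessary divisibility conditions on $\Delta_K$ (specifically $4 \mid \Delta_K$ always, and $12 \mid \Delta_K$ when the zeta quotient is $\frac{211}{4}$ or $\frac{211}{2}$) to further prune the list to Table \ref{table:badzetawithdiv}. The explicit field-by-field computations in Propositions \ref{211/3}, \ref{211/6}, and \ref{Fin} then eliminate every remaining field, either by exhibiting that the inertial and ramification data for the primes above $2$ and $3$ force residual torsion via Theorem \ref{tor} and Lemmas \ref{lem:2split}--\ref{lem:3ram}, or by observing that the parity of $|\Ram_f(\A)|$ required by the hyperbolicity discussion of Section \ref{sec:hypback} cannot be achieved.

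The main obstacle, which I would address last, is ensuring exhaustiveness: I need to confirm that every field in the Corollary \ref{cor:zetalist} enumeration falls into exactly one of the above buckets. Concretely this amounts to checking that for each candidate $K$, the rational number $\frac{|\zeta_K(-1)|}{2^{[K:\Q]-1}}$ is either of the form $\frac{1}{n}$ covered by Theorem \ref{1/n}, of the form $\frac{211}{n}$ covered by the case analysis above, or has a numerator or denominator incompatible with Equation \eqref{eq:3} (i.e. the product on the right-hand side cannot equal $211$ times the necessary integer). Assuming the bookkeeping closes, concatenating all these exclusions yields Theorem \ref{thm:main}.
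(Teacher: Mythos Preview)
Your proposal is correct and follows essentially the same route as the paper's own proof: bound the degree via Proposition~\ref{prop:degupperbnd}, enumerate candidate fields with Corollary~\ref{cor:zetalist}, dispose of $\Q$ and the real quadratics via Proposition~\ref{prop:noQ} and Theorem~\ref{thm:noquad}, handle the $\tfrac{1}{n}$ fields by Theorem~\ref{1/n}, and finish the $\tfrac{211}{n}$ fields with the discriminant divisibility filter and Propositions~\ref{211/3}, \ref{211/6}, \ref{Fin}. One minor bookkeeping slip: your parenthetical on the admissible $n$ in the $\tfrac{211}{n}$ case is garbled (the set $\{10,22,\dots,72\}$ belongs to the $\tfrac{1}{n}$ analysis, and you omit $n=30$ for the quartic with $\Delta_K=38000$) and you do not explicitly treat $n=1$ as the paper does, but since you already cite Proposition~\ref{Fin} and flag the exhaustiveness check in your final paragraph, nothing of substance is missing.
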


\begin{proof}
Indeed if such a surface was to exist then Equation \eqref{eq:3} gives that
$$211 = \frac{|\zeta_K(-1)|}{2^{[K:\Q] - 1}}\prod_{\mathfrak{p} \in \Ram_f(\A)}(\Nr(\mathfrak{p}) - 1),$$
for some totally real number field $K$ and some quaternion algebra $\A$ over $K$ such that $P(\A^1)$ contains no torsion.
For the above equation to be satisfied, we must have that $\frac{|\zeta_K(-1)|}{2^{[K:\Q] - 1}}=\frac{s}{n}$ where $s\in\{1,211\}$ and $n\in\N$.
Therefore Proposition \ref{prop:degupperbnd} gives that $[K:\Q]\le 10$.
Proposition \ref{prop:noQ} and Theorem \ref{thm:noquad} show that $K$ cannot be $\Q$ or a real quadratic, so we reduce to the case that $3\le [K:\Q]\le 10$.
Moreover, the beginning of the proof of Proposition \ref{quadprop} actually shows that $n=1$ is impossible in general, an argument which we briefly repeat.

If $n=1$ then we have that 
$$211 = s\prod_{\mathfrak{p} \in \Ram_f(\A)}(\Nr(\mathfrak{p}) - 1),$$
where again $s\in\{1,211\}$.
If $s=1$, then there is some prime $\p\in\Ram_f(\A)$ with $\Nr(\p)=212$ which is clearly absurd as the latter is not a prime power.
If $s=211$, then every $\p\in\Ram_f(\A)$ has the property that $\Nr(\p)=2$.
However Lemma \ref{lem:3split} then shows that $P(\A^1)$ contains $3$-torsion.
We are therefore reduced to analyzing fields for which $n\neq 1$.

For such fields, Equation \eqref{eq:4} and Proposition \ref{TAK} show that there exists numbers $U_d$ depending only on $d=[K:\Q]$ such that if $U_d < \Delta_K$ then
$$\frac{211}{2} < \frac{|\zeta_K(-1)|}{2^{d-1}}.$$
Using Voight's enumeration of all totally real number fields with discriminant less than $U_d$ \cite{Voight}, we then find all number fields where $\Delta_K < U_d$ in each degree.
SAGE's zeta function command \cite{Sage} then computes the value of $\frac{|\zeta_K(-1)|}{2^{d-1}}$ for each field, as in Corollary \ref{cor:zetalist}\footnote{ The full list of values for all such number fields is far too long to include in the paper, however we collect this list on the second author's website which can be found \textcolor{blue}{\href{http://pages.iu.edu/~nimimill/zetatables.pdf}{here}}. If this author's web address changes, this list will continue to be hosted by the author wherever he moves to.}.
From these computations, we list in Table \ref{table:badzeta} all number fields where $ \frac{|\zeta_K(-1)|}{2^{[K:\Q] - 1}} = \frac{211}{n}$ for some $n \in \N$, or where $ \frac{|\zeta_K(-1)|}{2^{[K:\Q] - 1}} = \frac{1}{n}$ and there exists some $k \mid n$ such that $211 k + 1$ is a prime power.
In the latter case, Theorem \ref{1/n} and Table \ref{table:badzeta} shows that no such congruence surface exists (notice that $2\nmid\Delta_K$ for the fields in question).
In the former case, the results in Section \ref{section:compred} show that no such construction exists for all fields in Table \ref{table:badzeta}, except for those which we list in Table \ref{table:badzetawithdiv}. 
To conclude, the results in Section \ref{section:comptables} prove no such construction exists for all fields listed in Table \ref{table:badzetawithdiv}. 
Consequently there is no congruence surface from a maximal order of genus $212$, for any of the number fields listed in Table \ref{table:badzeta}.
As these are the only possible field with $3\le[K:\Q]\le 10$ for which a construction is possible, this completes the proof.
\end{proof}

%~~~~~~~~~~~~~~~~~~~~~~~~~~~~~~~~~~~~~~~~~~~~~~~~~~~
%~~~~~~~~~~~~~~~~~~~~~~~~~~~~~~~~~~~~~~~~~~~~~~~~~~~
%~~~~~~~~~~~~~~~~~~~~~~~~~~~~~~~~~~~~~~~~~~~~~~~~~~~

\newpage

%~~~~~~~~~~~~~~~~~~~~~~~~~~~~~~~~~~~~~~~~~~~~~~~~~~~
%~~~~~~~~~~~~~~~~~~~~~~~~~~~~~~~~~~~~~~~~~~~~~~~~~~~
%~~~~~~~~~~~~~~~~~~~~~~~~~~~~~~~~~~~~~~~~~~~~~~~~~~~

\section{Appendix}
\begin{table}[H]
\centering
\begin{tabular}{||c | c |  c||}\hline
$\Delta_K$ & $p(x)$ & $\frac{|\zeta_K(-1)|}{2^{[K:\Q] -1}}$ \\
\hline
5 & $x^2 - 5$ & 1/60  \\
\hline
49 &  $x^3- x^2- 2x+1$ & 1/84 \\
\hline
7825 & $x^3-25x-45$ & 211/6 \\
\hline
9812 & $x^3-x^2-19x+33$ & 211/4 \\
\hline
13396 & $x^3-x^2-25x+29$ & 211/3 \\
\hline
725 & $x^4-x^3-3x^2+x+1$ & 1/60 \\
\hline
1125 & $x^4-x^3-4x^2+4x+1$ & 1/30 \\
\hline
2225 & $x^4-x^3-5x^2+2x+4$ & 1/10 \\
\hline
38000 & $x^4-20x^2+95$ & 211/30 \\
\hline
148889 & $x^4-2x^3-15x^2+16x+51$ & 211/3 \\
\hline
150057 & $x^4-2x^3-7x^2+5x+7$ & 211/3 \\
\hline
1060708 & $x^5-2x4-7x^3+13x^2+10x-17$ & 211/6 \\
\hline
1459417 & $x^5-2x4-7x^3+11x^2+9x-2$ & 211/3 \\
\hline
1471216 & $x^5-2x^4-7x^3+6x^2+8x-4$ & 211/3 \\
\hline
1630076 & $x^5-2x^4-9x^3+17x^2+4x-12$ & 211/3 \\
\hline
1723364 & $x^5-2x^4-7x^3+13x^2+8x-11$ & 211/3 \\
\hline
12694016 & $x^6-8x^4-2x^3+16x^2+8x-1$ & 211/6 \\ 
\hline
15004240 & $x^6-2x^5-11x^4+16x^3+35x^2-26x-17$ & 211/6 \\
\hline
15378496 & $x^6-2x^5-5x^4+8x^3+6x^2-6x-1$ & 211/6 \\
\hline
15700473 & $x^6-12x^4-2x^3+39x^2+12x-19$ & 211/6 \\
\hline
17386832 & $x^6-3x^5-4x^4+10x^3+6x^2-4x-2$ & 211/3 \\
\hline
17801408 & $x^6-9x^4-4x^3+16x^2+14x+3$ & 211/4 \\
\hline
18967381 & $x^6-2x^5-8x^4+11x^3+20x^2-14x-17$ & 211/4 \\
\hline
22340432 & $x^6-x^5-8x^4+5x^3+16x^2-5x-1$ & 211/3 \\
\hline
23556176 & $x^6-x^5-8x^4+5x^3+16x^2-7x-7$ & 211/3 \\
\hline
26768537 & $x^6-x^5-11x^4+11x^2+x-2$  & 211/2 \\
\hline  
154050496 & $x^7-x^6-8x^5+6x^4+13x^3-9x^2-x+1$ & 211/6 \\
\hline
225111553 & $x^7-2x^6-6x^5+9x^4+12x^3-8x^2-8x-1$ & 211/4 \\
\hline
236583241 & $x^7-x^6-9x^5+4x^4+16x^3-5x^2-5x+1$ & 211/3 \\
\hline
343318749 & $x^7-2x^6-6x^5+12x^4+8x^3-17x^2+3$ & 211/2 \\
\hline
\end{tabular}
\caption{Number Fields with Relevant $\frac{|\zeta_K(-1)|}{2^{[K:\Q] - 1}}$ Values}
\label{table:badzeta}
\end{table}
\newpage
%% \begin{table}[H]
%% \centering
%% \begin{tabular}{||c | c | c ||} \hline
%% $[K:\Q]$ & $\Delta_K$ & $\Delta_K^{1/[K:\Q]}$  \\
%% \hline
%% 3                      & 21922                    & 27.99                                                  \\
%% \hline
%% 4                      & 254161                   & 22.45                                                  \\
%% \hline
%% 5                      & 2946779                  & 19.67                                                  \\
%% \hline
%% 6                      & 34165459                 & 18.01                                                  \\
%% \hline
%% 7                      & 396120284                & 16.91                                                  \\
%% \hline
%% 8                      & 4592687600               & 16.13                                                  \\
%% \hline
%% 9                      & 53248420466              & 15.55                                                  \\
%% \hline
%% 10                     & 617371467322             & 15.10                                                  \\
%% \hline
%% 11                     & 7157912391165            & 14.74      \\                        
%% \hline                    
%% \end{tabular}
%% \caption{Upper Bounds for Discriminant in Fixed Degree}
%% \end{table}
\begin{table}[H]
\centering
\begin{tabular}{||c | c |  c||}\hline
$\Delta_K$ & $p(x)$ & $\frac{|\zeta_K(-1)|}{2^{[K:\Q] -1}}$ \\
\hline
13396 & $x^3-x^2-25x+29$ & 211/3 \\
\hline
38000 & $x^4-20x^2+95$ & 211/30 \\
\hline
1060708 & $x^5-2x4-7x^3+13x^2+10x-17$ & 211/6 \\
\hline
1471216 & $x^5-2x^4-7x^3+6x^2+8x-4$ & 211/3 \\
\hline
1630076 & $x^5-2x^4-9x^3+17x^2+4x-12$ & 211/3 \\
\hline
1723364 & $x^5-2x^4-7x^3+13x^2+8x-11$ & 211/3 \\
\hline
12694016 & $x^6-8x^4-2x^3+16x^2+8x-1$ & 211/6 \\ 
\hline
15004240 & $x^6-2x^5-11x^4+16x^3+35x^2-26x-17$ & 211/6 \\
\hline
15378496 & $x^6-2x^5-5x^4+8x^3+6x^2-6x-1$ & 211/6 \\
\hline
17386832 & $x^6-3x^5-4x^4+10x^3+6x^2-4x-2$ & 211/3 \\
\hline
22340432 & $x^6-x^5-8x^4+5x^3+16x^2-5x-1$ & 211/3 \\
\hline
23556176 & $x^6-x^5-8x^4+5x^3+16x^2-7x-7$ & 211/3 \\
\hline
154050496 & $x^7-x^6-8x^5+6x^4+13x^3-9x^2-x+1$ & 211/6 \\
\hline
\end{tabular}
\caption{Remaining Number Fields where Splitting Behavior of 2,3 Must be Checked Computationally}
\label{table:badzetawithdiv}
\end{table}

\newpage

\begin{table}[H]
\begin{tabular}{ccccccccc}
\begin{tabular}{| c | c |} \hline
d & $\frac{\zeta_K(-1)}{2}$ \\
\hline
2 & 1/24 \\
\hline
3 & 1/12 \\
\hline
5 & 1/60  \\
\hline
6 & 1/4 \\
\hline
7 & 1/3 \\
\hline
10 & 7/12 \\
\hline
11 & 7/12 \\
\hline
13 & 1/12 \\
\hline
14 & 5/6 \\
\hline
15 & 1 \\
\hline
17 & 1/6 \\
\hline
19 & 19/12 \\
\hline
21 & 1/6 \\
\hline
22 & 23/12 \\
\hline
23 & 5/3 \\
\hline
26 & 25/12 \\
\hline
29 & 1/4 \\
\hline
30 & 17/6 \\
\hline
31 & 10/3 \\
\hline
33 & 1/2 \\
\hline
34 & 23/6 \\
\hline
35 & 19/6 \\
\hline
37 & 5/12 \\
\hline
38 & 41/12 \\
\hline
39 & 13/3 \\
\hline
41 & 2/3 \\
\hline
42 & 9/2 \\
\hline
43 & 21/4 \\
\hline
46 & 37/6 \\
\hline
47 & 14/3 \\
\hline
51 & 13/2 \\
\hline
53 & 7/12 \\
\hline
55 & 23/3 \\
\hline
57 & 7/6 \\
\hline
58 & 33/4 \\
\hline
59 & 85/12 \\
\hline
61 & 11/12 \\
\hline
62 & 7 \\
\hline
65 & 4/3 \\
\hline
66 & 28/3 \\
\hline
67 & 41/4 \\
\hline
69 & 1 \\
\hline
70 & 67/6 \\
\hline
71 & 29/3 \\
\hline
73 & 11/6 \\
\hline
74 & 41/4 \\
\hline
\end{tabular}
&

&
\begin{tabular}{| c | c |} \hline
d & $\frac{\zeta_K(-1)}{2}$ \\
\hline
77 & 1 \\
\hline
78 & 23/2 \\
\hline
79 & 14 \\
\hline
82 & 27/2 \\
\hline
83 & 43/4 \\
\hline
85 & 3/2 \\
\hline
86 & 155/12 \\
\hline
87 & 13 \\
\hline
89 & 13/6 \\
\hline
91 & 103/6 \\
\hline
93 & 3/2 \\
\hline
94 & 53/3 \\
\hline
95 & 43/3 \\
\hline
97 & 17/6 \\
\hline
101 & 19/12 \\
\hline
102 & 103/6 \\
\hline
103 & 19 \\
\hline
105 & 3 \\
\hline
106 & 87/4 \\
\hline
107 & 197/12 \\
\hline
109 & 9/4 \\
\hline
110 & 103/6 \\
\hline
111 & 61/3 \\
\hline
113 & 3 \\
\hline
114 & 22 \\
\hline
115 & 139/6 \\
\hline
118 & 277/12 \\
\hline
119 & 62/3 \\
\hline
122 & 77/4 \\
\hline
123 & 45/2 \\
\hline
127 & 80/3 \\
\hline
129 & 25/6 \\
\hline
130 & 173/6 \\
\hline
131 & 93/4 \\
\hline
133 & 17/6 \\
\hline
134 & 301/12 \\
\hline
137 & 4 \\
\hline
138 & 77/3 \\
\hline
139 & 127/4 \\
\hline
141 & 3 \\
\hline
142 & 63/2 \\
\hline
143 & 73/3 \\
\hline
145 & 16/3 \\
\hline
149 & 35/12 \\
\hline
157 & 43/12 \\
\hline
161 & 16/3 \\
\hline
\end{tabular}
&

&
\begin{tabular}{| c | c |} \hline
d & $\frac{\zeta_K(-1)}{2}$ \\
\hline
165 & 11/3 \\
\hline
173 & 13/4 \\
\hline
177 & 13/2 \\
\hline
178 & 128/3 \\
\hline
179 & 157/4 \\
\hline
181 & 19/4 \\
\hline
185 & 19/3 \\
\hline
193 & 49/6 \\
\hline
197 & 49/12 \\
\hline
201 & 49/6 \\
\hline
205 & 17/3 \\
\hline
209 & 47/6 \\
\hline
213 & 5 \\
\hline
217 & 29/3 \\
\hline
221 & 16/3 \\
\hline
229 & 27/4 \\
\hline
233 & 53/6 \\
\hline
237 & 35/6 \\
\hline
241 & 71/6 \\
\hline
249 & 23/2 \\
\hline
253 & 15/2 \\
\hline
257 & 10 \\
\hline
265 & 40/3 \\
\hline
269 & 83/12 \\
\hline
273 & 37/3 \\
\hline
277 & 103/12 \\
\hline
281 & 25/2 \\
\hline
285 & 8 \\
\hline
293 & 85/12 \\
\hline
301 & 31/3 \\
\hline
305 & 41/3 \\
\hline
309 & 59/6 \\
\hline
313 & 50/3 \\
\hline
317 & 101/12 \\
\hline
321 & 33/2 \\
\hline
329 & 47/3 \\
\hline
337 & 19 \\
\hline
341 & 59/6 \\
\hline
345 & 55/3 \\
\hline
349 & 151/12 \\
\hline
353 & 16 \\
\hline
357 & 11 \\
\hline
365 & 65/6 \\
\hline
373 & 161/12 \\
\hline
377 & 53/3 \\
\hline
381 & 77/6 \\
\hline
\end{tabular}
&

&
\begin{tabular}{| c | c |} \hline
d & $\frac{\zeta_K(-1)}{2}$ \\
\hline
385 & 71/3 \\
\hline
389 & 151/12 \\
\hline
393 & 43/2 \\
\hline
397 & 57/4 \\
\hline
401 & 43/2 \\
\hline
409 & 79/3 \\
\hline
413 & 73/6 \\
\hline
417 & 47/2 \\
\hline
421 & 209/12 \\
\hline
429 & 16 \\
\hline
433 & 163/6 \\
\hline
437 & 77/6 \\
\hline
445 & 55/3 \\
\hline
449 & 51/2 \\
\hline
453 & 91/6 \\
\hline
457 & 30 \\
\hline
461 & 61/4 \\
\hline
465 & 28 \\
\hline
469 & 20 \\
\hline
473 & 51/2 \\
\hline
481 & 101/3 \\
\hline
485 & 101/6 \\
\hline
489 & 187/6 \\
\hline
493 & 119/6 \\
\hline
497 & 27 \\
\hline
498 & 560/3 \\
\hline
501 & 20 \\
\hline
505 & 36 \\
\hline
509 & 215/12 \\
\hline
517 & 127/6 \\
\hline
521 & 31 \\
\hline
533 & 37/2 \\
\hline
537 & 67/2 \\
\hline
541 & 301/12 \\
\hline
545 & 95/3 \\
\hline
553 & 119/3 \\
\hline
554 & 2525/12 \\
\hline
557 & 233/12 \\
\hline
561 & 118/3 \\
\hline
565 & 151/6 \\
\hline
569 & 109/3 \\
\hline
573 & 22 \\
\hline
\end{tabular}
\end{tabular}
\caption{Quadratic Fields $\Q(\sqrt{d})$, and $\frac{\zeta_K(-1)}{2^{[K:\Q] -1}}$  for $\Delta_K < 574$}
\label{table:quadfield}
\end{table}

\bibliographystyle{abbrv}
\bibliography{Biblio}

\end{document}